\newtheorem{theorem}{Theorem}[section]
\newtheorem{corollary}{Corollary}[section]
\newcommand{\qmq}[1]{\quad\mbox{#1}\quad}
\newcommand{\abs}[1]{\left|{#1}\right|}
\newcommand*{\norm}[1]{\left \lVert {#1} \right \rVert} 
\newcommand*{\Var}[1]{\mathrm{Var} \left [ {#1} \right ]} 
\newcommand*{\Varsub}[2]{\mathrm{Var}_{#1} \left [ {#2} \right ]}
\newcommand*{\Cov}[2]{\mathrm{Cov} \left [ {#1}, {#2} \right ]}
\newcommand*{\E}[1]{\mathbb{E} \left [ {#1} \right ]}
\newcommand*{\Esub}[2]{\mathbb{E}_{#1} \left [ {#2} \right ]}
\newcommand*{\R}{\mathbb{R}} 
\newcommand*{\diag}[1]{\mathrm{diag}({#1})}
\newcommand{\To}{\rightarrow}
\title{Finite-sample bounds to the normal limit under group sequential sampling}
\date{Last modified 13.Feb.23}
\author{\textsc{Julian Aronowitz}$^\dag$ and \textsc{Jay Bartroff}$^*$\\
\small{$^\dag$Google Inc., Culver City, California, USA}\\
\small{$^*$Department of Statistics and Data Sciences, University of Texas at Austin, Austin, Texas, USA}\\
}
\begin{document}
\maketitle

\abstract{In group sequential analysis, data is collected and analyzed in batches until pre-defined stopping criteria are met. Inference in the parametric setup typically relies on the  limiting asymptotic multivariate normality of the repeatedly computed maximum likelihood estimators (MLEs), a result first rigorously proved by \citet{jt} under general regularity conditions. In this work, using Stein’s method we provide optimal order, non-asymptotic bounds on the distance for smooth test functions between the joint group sequential MLEs and the appropriate normal distribution under the same conditions. Our results assume independent observations but allow heterogeneous (i.e., non-identically distributed) data. We examine how the resulting bounds simplify when the data comes from an exponential family.  Finally, we present a general result relating multivariate Kolmogorov distance to smooth function distance which, in addition to extending our results to the former metric, may be  of independent interest.
}

\section{Introduction}\label{sec:intro}

Sequential analysis is a powerful statistical framework in which sample sizes are not determined before conducting a study, and is the dominant statistical methodology in many types of clinical trials \citep{Bartroff13}, among other applications. In clinical trials in particular, data is often collected and analyzed in \emph{groups} until the conditions of a pre-defined stopping criteria are met. In their authoritative textbook on this topic, \citet{sequential_book} note that group sequential analysis can significantly reduce the time required to conduct clinical trials over the traditional fixed sample size designs.

Group sequential analysis is typically  based on repeatedly computed (after each group) maximum likelihood estimators (MLEs), and therefore all the properties of the statistical test (e.g., expected sample size, type~I error probability, power) are functions of the \emph{joint} distribution of these repeatedly computed MLEs. Suppose independent but not necessarily identically distributed observations $Y_1, \dots , Y_n \in \R^t$ are divided into $K$ groups with $n_k$ denoting the number of observations seen up to and including group $k$ for $k = 1, \dots, K$. Let $f_i(y_i ; \theta)$ be the probability mass or density function of $Y_i$ with $\theta \in \Theta \subset \R^d$. We let $\hat{\theta}_k =  \hat{\theta}_k(Y_1, Y_2, \dots, Y_{n_k}) \in \R^{d\times 1}$ be the MLE based on the observations in the first $k$ groups and $\hat{\theta}^K = [\hat{\theta}_1^\intercal, \hat{\theta}_2^\intercal, \dots, \hat{\theta}_K^\intercal]^\intercal \in \R^q$, where $q=dK$, be the group sequential MLE. Here and throughout, $(\cdot)^\intercal$ denotes transpose. Although the exact identification of the distribution of $\hat{\theta}^K$ is often intractable, \cite{jt} showed that it is asymptotically multivariate normal under suitable regularity conditions (see Theorem \ref{thm:MLE sequential asymtotic normality theorem}). This result is perhaps unsurprising in light of the classical result that MLEs are asymptotically normal under the same regularity conditions; see Theorem \ref{thm:MLE asymtotic normality theorem} below.

Natural questions that then arise from this formulation are how many samples are required, and in which groups should the samples be allocated, for the asymptotics to `kick in' and a normal approximation to become appropriate without introducing too much error? One form of an answer to this question would be an upper bound on the error of the multivariate normal approximation of the group sequential MLE. In this work we  provide an optimal order bound on the distance between the group sequential MLE and the appropriate normal distribution under the same regularity conditions with heterogeneous independent data (our Theorem~\ref{my_result_1}). The proof relies on Stein's method and Taylor series arguments. It most closely follows \cite{mvn} in which a similar bound was derived on the multivariate MLE but without the joint, repeatedly computed group sequential structure. Our result generalizes \cite{mvn} since when there is only one group our bound reduces to that found in \cite{mvn}.

In this work we give an upper bound on
$$\abs{\mathbb{E} [h(X)] - \mathbb{E} [ h(Z)]}$$
where $Z$ is the standard multivariate normal, $X$ is the normalized group sequential MLE, and $h$ is a test function in some function class $\mathcal{H}$. In particular for any three times differentiable function $h: \R^q \rightarrow \R$ we denote $\norm{h} := \sup \abs{h}, \norm{h}_1 := \sup_i \abs{\frac{\partial}{\partial x_i} h}, \norm{h}_2 := \sup_{i,j} \abs{\frac{\partial^2}{\partial x_i \partial x_j} h}$, and $\norm{h}_3 := \sup_{i,j, k} \abs{\frac{\partial^3}{\partial x_i \partial x_j \partial x_k} h}$. Then let
\begin{multline} \label{H}
\mathcal{H} = \{h: \R^q \rightarrow \R: h \text{ is three times differentiable with bounded }\\ \norm{h}, \norm{h}_1, \norm{h}_2, \norm{h}_3\}.
\end{multline}

A non-asymptotic bound for the univariate MLE was first developed by \cite{AR_bound} and then expanded to the multivariate case by \cite{mvn}. In \cite{delta} and \cite{mvn_delta}, the bounds for the univariate and multivariate cases respectively were sharpened and simplified under the additional assumption that the MLE follows a special form. Let $\bm{Y} = (\bm{Y_1}, \dots, \bm{Y_n})$ be a random sample of $n$ i.i.d.\ $t$-dimensional random vectors, $\Theta \subset \R^d$, and $\bm{\hat{\theta}}_n(\bm{Y})$ be the resulting MLE. The special form considered by these authors  is 
\begin{equation}\label{eq:mle-special-form}
p \big ( \bm{\hat{\theta}}_n(\bm{Y}) \big ) = \frac{1}{n} \sum_{i = 1}^{n} g(\bm{Y_i}) = \frac{1}{n} \sum_{i = 1}^{n} (g_1(\bm{y_i}), \dots, g_d(\bm{y_i}))^\intercal
\end{equation}
for some $g: \R^t \rightarrow \R^d$ and $p: \Theta \rightarrow \R^d$. One simple example of an MLE that follows this form is the case of independent normal data with unknown mean and variance. \citet{mle_bound_m_dependence} considers data that is $m$-dependent for identically distributed scalar parameter MLEs.  The group sequential MLE is necessarily multivariate and so this particular result will not be generalizable to our setting.

Each of the bounds in \citet{AR_bound}, \citet{mvn}, \citet{delta}, \citet{mvn_delta}, and \citet{mle_bound_m_dependence} are of optimal order $\mathcal{O}(n^{-1/2})$ and defined in terms of slightly different $\mathcal{H}$'s where in general the multidimensional bounds require additional bounded derivatives. All of these results use Stein's method techniques and require $h \in \mathcal{H}$ to be at least bounded and absolutely continuous. \citet{kol_mle_bound} gives a bound of the optimal order on the rate of convergence to normality for univariate MLEs in terms of the Kolmogorov distance, $\mathcal{H} = \{\mathbbm{1}\{x \leq a\}: a \in \R \}$. Unlike the above results this bound does not make use of Stein's method.  Our Corollary~\ref{cor:kol.m3} can be veiwed as a generalization of this to multivariate MLEs and the group sequential setting, and is the first that we are aware of by any method.

Bounds derived derived from characteristic functions \citep{ulyanov1979, ulyanov1986, ulyanov1987} could be of use in our setting, if not for two potential hurdles. First, in the multivariate setting these methods require independence. As one goal here is to develop a method that may later be applicable to observations with some form of  dependence, characteristic function methods appear to be insufficient for that goal. Second, these methods do not provide explicit constants for the bounds produced.

The rest of this paper is organized as follows.  In Section~\ref{sec:bg} we introduce the remaining needed background on MLEs and Stein's method. In Section~\ref{sec:main-result} we present our main result, which we then specialize to exponential family distributions and the exponential distribution in Section~\ref{sec:exp-fam}. And in Section~\ref{sec:mv.K.dist} we present results relating multivariate Kolmogorov distance to smooth function distance, extending our results to the former metric which gives bounds to normality for multivariate MLEs in terms of Kolmogorov distance.

\section{Background}\label{sec:bg}

\subsection{Stein's Method} \label{sec:intro_to_steins_method}
The results  detailed below make heavy use of Stein's method bounds in combination with multivariate Taylor series arguments. We do not attempt to give a complete introduction to Stein's method here, but rather focus on the relevant multivariate results. Readers interested in a more broad introduction are referred to \cite{larrybook}.

Stein's method for multivariate normal approximations hinges on the following multivariate version of the Stein equation. Let $f:\R^q \rightarrow \R$ be twice differentiable and $D^2$ the second derivative, or Hessian matrix, of $f$. Then for $h: \R^q \rightarrow \R$ a multivariate Stein equation is 
\begin{equation*} 
\mathrm{Tr}D^2 f(\bm{w}) - \bm{w}\cdot \nabla f(\bm{w}) = h(\bm{w}) - Nh.
\end{equation*}
Here $Nh$ denotes the expectation $\mathbb{E}h(Z)$, where $Z$ is  a $q$-dimensional standard normal vector.
The following  bound of \citet{size_bias_and_multivariate_solution_bound} relates derivatives of the Stein equation solution~$f$ to those of the test function~$h$:
\begin{equation}\label{properties of the solution multivariate}
\norm{\frac{\partial^k f(\bm{w})}{\prod_{j=1}^{k} \partial w_{i_j}}} \leq \frac{1}{k} \norm{\frac{\partial^k h(\bm{w})}{\prod_{j=1}^{k} \partial w_{i_j}}}, \quad k \geq 1,
\end{equation}
when the $k$th partial derivative of $h$ exists.

\subsubsection{Exchangeable Pairs}
Applying Stein's method to a random vector~$W$ often  relies on creating a coupled random variable~$W'$ whose relationship to $W$ is used  to produce a bound on the distance between $W$ and its limiting distribution. An \textit{exchangeable pair} is a coupling $(W, W')$ such that $(W, W')$ is equal in distribution to $(W',W)$.  The main result we utilize, due to \citet{RR_exchangeable_pair_bound} (see  Theorem~\ref{RR_exchangeable_pair_bound}, below, requires a type of relaxed linearity condition
\begin{equation} \label{linearity condition multivariate}
\mathbb{E} [W' - W \mid W] = -\Lambda W + R
\end{equation}
between the exchangeable vector pair $(W,W')$, where  $\Lambda$ is an invertible matrix ~$R$ is an ``error'' term.
In all that follows, for a matrix $X$ let $\abs{X} := \sup \limits_{i, j} \abs{X_{ij}}$.

\begin{theorem}[\citet{RR_exchangeable_pair_bound}] \label{RR_exchangeable_pair_bound} 
	Assume that $(W, W')$ is an exchangeable pair of $\R^d$-valued random vectors such that 
	\begin{equation*}
	\mathbb{E} W = 0, \quad \mathbb{E} WW^\intercal = \Sigma
	\end{equation*}
	with $\Sigma \in \R^{d \times d}$ symmetric and positive definite. Suppose further that \eqref{linearity condition multivariate} is satisfied for an invertible matrix $\Lambda$ and a $\sigma(W)$-measurable random vector $R$. Then, if $Z$ has $d$-dimensional standard normal distribution, we have for every three times differentiable function $h$, 
	\begin{equation*}
	\abs{\mathbb{E} h(W) - \mathbb{E} h (\Sigma^{\frac{1}{2}} Z)} \leq \frac{\norm{h}_2}{4}A + \frac{\norm{h}_3}{12}B + \Big (\norm{h}_1 + \frac{1}{2} d \abs{\Sigma}^{1/2} \norm{h}_2 \Big ) C,
	\end{equation*}
	where, with $\lambda^{(i)} := \sum_{m = 1}^{d} \abs{(\Lambda^{-1})_{m,i}}$,
	\begin{align*}
	A &= \sum_{i, j = 1}^{d} \lambda^{(i)} \sqrt{\Var{\mathbb{E} \{(W_i' - W_i)(W_j' - W_j) \mid W \}}}, \\
	B &= \sum_{i, j, k = 1}^{d} \lambda^{(i)} \mathbb{E} \abs{(W_i' - W_i)(W_j' - W_j)(W_k' - W_k)}, \\
	C &= \sum_{i = 1}^{d} \lambda^{(i)} \sqrt{\Var{R_i}}
	\end{align*}
\end{theorem}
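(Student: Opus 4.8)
The plan is to establish this bound by Stein's method of exchangeable pairs, working with the Stein equation for the $N(0,\Sigma)$ target, $\mathrm{Tr}(\Sigma D^2 f(w)) - w^\intercal \nabla f(w) = h(w) - \mathbb{E}h(\Sigma^{1/2}Z)$, whose solution $f=f_h$ has partial derivatives controlled by those of $h$ through the bound \eqref{properties of the solution multivariate} (which continues to hold for this covariance, since differentiating the integral representation of $f$ in $w$ produces the same $1/k$ scaling). Evaluating at $W$ and taking expectations reduces the problem to controlling $\abs{\mathbb{E}[\mathrm{Tr}(\Sigma D^2 f(W))] - \mathbb{E}[W^\intercal \nabla f(W)]}$, i.e.\ to showing that the two terms produced by the Stein operator nearly cancel.

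The key step is to feed the linearity condition \eqref{linearity condition multivariate} into the drift term. Writing $W = -\Lambda^{-1}\mathbb{E}[W'-W \mid W] + \Lambda^{-1}R$ and setting $\hat g := (\Lambda^{-1})^\intercal \nabla f$, I would obtain $\mathbb{E}[W^\intercal \nabla f(W)] = -\mathbb{E}[(W'-W)^\intercal \hat g(W)] + \mathbb{E}[R^\intercal \hat g(W)]$; the factor $\Lambda^{-1}$ here is exactly what generates the weights $\lambda^{(i)}$ once the derivative bounds are applied, since $\partial_j \hat g_i = \sum_m (\Lambda^{-1})_{mi}\partial_j\partial_m f$ and hence $\abs{\partial_j \hat g_i}\le \tfrac12 \lambda^{(i)}\norm{h}_2$. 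Exchangeability of $(W,W')$ then yields the antisymmetrization $\mathbb{E}[(W'-W)^\intercal \hat g(W)] = -\tfrac12\mathbb{E}[(W'-W)^\intercal(\hat g(W')-\hat g(W))]$, and a second-order Taylor expansion of $\hat g(W')-\hat g(W)$ about $W$ splits the right-hand side into a term quadratic in the increments plus a third-order remainder.

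What remains is to account for three sources of error. First, conditioning the quadratic term on $W$ and replacing $S_{ij}:=\mathbb{E}[(W_i'-W_i)(W_j'-W_j)\mid W]$ by its mean costs, via Cauchy--Schwarz and the derivative bound above, a quantity of size $\tfrac14 \norm{h}_2 \sum_{i,j}\lambda^{(i)}\sqrt{\Var{S_{ij}}}$, which is the $A$-term. Second, a short computation from exchangeability and \eqref{linearity condition multivariate} gives $\mathbb{E}[(W'-W)(W'-W)^\intercal] = 2\Lambda\Sigma - 2\mathbb{E}[RW^\intercal]$; the $2\Lambda\Sigma$ piece, after the weighting collapses $\sum_i (\Lambda^{-1})_{mi}(\Lambda\Sigma)_{ij}=\Sigma_{mj}$, reproduces $\mathbb{E}[\mathrm{Tr}(\Sigma D^2 f(W))]$ exactly and cancels it, while the $\mathbb{E}[RW^\intercal]$ piece, bounded by $\abs{\mathbb{E}[R_iW_j]}\le \sqrt{\Var{R_i}}\,\abs{\Sigma}^{1/2}$ and summed over the $d$ values of $j$, yields the $\tfrac12 d\abs{\Sigma}^{1/2}\norm{h}_2\, C$ contribution. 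Third, the leftover $\mathbb{E}[R^\intercal \hat g(W)]$ is handled using $\mathbb{E}R=0$ (a consequence of the hypotheses, since $\mathbb{E}[W'-W]=0$) together with $\norm{\partial_m f}\le \norm{h}_1$, giving $\norm{h}_1 C$, while the Taylor remainder contributes $\tfrac{1}{12}\norm{h}_3 B$. Collecting these four contributions reproduces the stated inequality.

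The main obstacle is the bookkeeping forced by the non-scalar $\Lambda$: unlike the univariate exchangeable-pairs argument one cannot simply divide by a constant, and keeping the $\Lambda^{-1}$-weighted gradients organized so that they assemble cleanly into the weights $\lambda^{(i)}$ --- while simultaneously isolating the three error terms --- requires care. A secondary difficulty is verifying that every $R$-dependent error (both the $\mathbb{E}[RW^\intercal]$ correction inside the second moment of the increments and the residual $\mathbb{E}[R^\intercal \hat g]$) consolidates into the single term $C$ with the correct coefficient $\norm{h}_1 + \tfrac12 d\abs{\Sigma}^{1/2}\norm{h}_2$.
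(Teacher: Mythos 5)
The paper itself contains no proof of this theorem---it is imported verbatim from \citet{RR_exchangeable_pair_bound}---and your proposal is a correct reconstruction of Reinert and R\"ollin's original exchangeable-pairs argument: the $N(0,\Sigma)$ Stein equation with the $1/k$ derivative bounds carrying over, the antisymmetrization identity $\mathbb{E}[(W'-W)^\intercal \hat g(W)] = -\tfrac12\mathbb{E}[(W'-W)^\intercal(\hat g(W')-\hat g(W))]$, the exact cancellation of $\mathbb{E}[\mathrm{Tr}(\Sigma D^2 f(W))]$ through the collapse $\sum_i(\Lambda^{-1})_{mi}(\Lambda\Sigma)_{ij}=\Sigma_{mj}$, the use of $\mathbb{E}R=0$ for both $R$-terms, and the coefficients $\tfrac14$, $\tfrac1{12}$, and $\norm{h}_1+\tfrac12 d\abs{\Sigma}^{1/2}\norm{h}_2$ all check out. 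The one step you should make explicit in a write-up is the identity $\mathbb{E}[(W'-W)(W'-W)^\intercal]=2\Lambda\Sigma-2\mathbb{E}[RW^\intercal]$: the direct computation only gives the symmetrized form $\Lambda\Sigma+\Sigma\Lambda^\intercal-\mathbb{E}[RW^\intercal]-\mathbb{E}[WR^\intercal]$, and one needs the observation that exchangeability forces $\mathbb{E}[W'W^\intercal]$, hence $\Lambda\Sigma-\mathbb{E}[RW^\intercal]$, to be symmetric in order to arrive at the expression you use.
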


For a given random vector $W$, it may not be obvious which $W'$ is ``best'' to use in the theorem, and different choices will produce different bounds. From the  constants $A$, $B$, and $C$ one can generally see that a $W'$ is desirable if it produces an $R$ with small variance and $W'$ ``close'' to $W$. A common choice when $W$ is a sum of random vectors is to select uniformly at random one of the summands, replace it by an i.i.d.\ copy, and set $W'$ to be the resulting sum, which is our approach here.

\subsection{Asymptotic Normality of Maximum Likelihood Estimators}\label{sec:asymptotic_results_for_MLE}
For $Y_1, Y_2, \dots, Y_n \in \R^t$ independent but not necessarily identically distributed, let $f_i(y_i ; \theta)$ be the probability mass or density function of $Y_i$ with the parameter space $\Theta$ an open subset of $\R^d$. Suppose the observations $Y_1, \dots , Y_n$ are divided into $K$ groups with $n_k$ denoting the number of observations seen up to and including group $k$ for $k = 1, \dots, K$ with $n = n_K$. For example, $\{Y_1, Y_2, \dots, Y_{n_3}\}$ denotes all observations in the first 3 groups. We let $\hat{\theta}_k =  \hat{\theta}_k(Y_1, Y_2, \dots, Y_{n_k}) \in \R^d$ denote the MLE based on the observations in the first $k$ groups, and $G_k = \{n_{k-1}+1, \dots, n_k \}$ the set of indices of the observations in group $k$. The likelihood function at analysis $k$ is $L_k(\theta ; \bm{y}) = \prod_{i = 1}^{n_k} f_i(y_i | \theta)$ and the log-likelihood function is $\ell_{k}(\theta, \bm{y}) = \log (L_k(\theta ; \bm{y}))$. 

Below we state well-known regularity conditions that are sufficient for the asymptotic normality of the fixed-sample MLE, which we record as Theorem~\ref{thm:MLE asymtotic normality theorem}. These conditions are also sufficient for asymptotic normality in the group sequential setting, a result due to \cite{jt} which we record here as Theorem~\ref{thm:MLE sequential asymtotic normality theorem}.

\begin{enumerate}[label=(R\arabic*)]
\item $\hat{\theta}_n(Y) \overset{p}{\rightarrow} \theta_0$, as $n \rightarrow \infty$, where $\theta_0$ is the true parameter vector. \label{reg con 1}
	\item The score function, $S_i(Y_i ; \theta) = \nabla_{\theta} \log f_i(Y_i ; \theta) \in \R^{d\times 1}$ and the information matrix, $I_i(\theta) = \mathrm{Var}_{\theta} \left [ S_i(Y_i ; \theta) \right ] \in \R^{d \times d}, \quad i = 1, \dots, n$ exist almost surely with respect to the probability measure $\mathbb{P}$. \label{reg con 2}
	\item $I_i(\theta)$ is a continuous function of $\theta, \forall i = 1, 2, \dots, n$, almost surely with respect to $\mathbb{P}$ and is a measurable function of $Y_i$. \label{reg con 3}
	\item $\mathbb{E}_\theta [S_i(Y_i ; \theta)] = 0_d \in \R^{d \times 1}$ where $0_d$ is the $d$-column vector of all zeros. \label{reg con 4}
	\item $I_i(\theta) = \mathbb{E}_\theta [ [\nabla \log f_i(Y_i ; \theta)] [\nabla \log f_i(Y_i ; \theta)]^\intercal] = \mathbb{E}_{\theta} [-\nabla_{\theta} S_i^\intercal(Y_i ; \theta)]$. \label{reg con 5}
	\item \label{reg con 6} For 
	\begin{equation*}
	\bar{I}_n(k, \theta) = \frac{1}{n} \sum_{i=1}^{n_k} I_i(\theta) \; k = 1, \dots, K,
	\end{equation*}
	there exists a matrix  $\bar{I}(k, \theta) = \lim\limits_{n \rightarrow \infty} \bar{I}_n(k, \theta)$. In addition, $\bar{I}_n(k, \theta)$ and $\bar{I}(k, \theta)$ are symmetric and $\bar{I}(k, \theta)$ is positive definite for all $\theta$.
	\item For some $\delta > 0, \frac{\sum_{i} \mathbb{E}_{\theta_0} \abs{\bm{\lambda}^\intercal S_i(Y_i; \theta)}^{2+\delta}}{n^{\frac{2 +\delta}{2}}} \underset{n \rightarrow \infty}{\longrightarrow} 0$ for all $\bm{\lambda} \in \R^d$. \label{reg con 7}
	\item \sloppy With $\norm{\cdot}$ the ordinary Euclidean norm on $\R^d$, then for $i,j,u \in \{1,2, \dots, d\}$ there exists $\epsilon >0, C >0, \delta>0$ and random variables $B_{i,j,u}(Y_u)$ such that \label{reg con 8}
	\begin{enumerate}[label=(\roman*)]
		\item $\sup \left \{ \abs{\frac{\partial^2}{\partial \theta_i \partial \theta_j} \log (f_u(Y_u; t)) }\mid \norm{t-\theta_0} \leq \epsilon \right \} \leq B_{i,j,u}(Y_u)$,
		\item $\mathbb{E} \abs{B_{i,j,u}(Y_u)}^{1 + \delta} \leq C$.
	\end{enumerate}
\end{enumerate}
	
The classical result that MLEs are asymptotically normal is the following. See, for example, \citet{asymptotic_book}  for a proof, and \citet{hoadley1971} for a further discussion of the sufficient conditions.

\begin{theorem}\label{thm:MLE asymtotic normality theorem}
	Let $Y_1, Y_2, \dots, Y_n$ be independent random vectors with probability density (or mass) function $f_i(y_i; \theta)$, where $\theta \in \Theta \subset \R^d$. Assume that the MLE exists and is unique and that the regularity conditions \ref{reg con 1}-\ref{reg con 8} hold. Also let $Z \sim \mathcal{N}_d(0_d, 1_{d\times d})$ be the standard multivariate d-dimensional normal. Then,
	\begin{equation*}
	\sqrt{n}[\bar{I}_n(\theta_0)]^\frac{1}{2} \big( \hat{\theta}_n(Y) - \theta_0 \big) \overset{d}{\underset{n\rightarrow \infty}{\longrightarrow}} Z
	\end{equation*}
\end{theorem}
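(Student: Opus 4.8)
The plan is to follow the classical Taylor-expansion argument for MLE asymptotics, adapted to independent but non-identically distributed data. Since the MLE exists, is unique, and the score is well defined under \ref{reg con 2}, the estimator $\hat{\theta}_n$ is an interior critical point of the log-likelihood and hence solves the score equation $\sum_{i=1}^n S_i(Y_i;\hat{\theta}_n) = 0_d$. I would then apply a first-order multivariate Taylor expansion to each component of the map $\theta \mapsto \sum_i S_i(Y_i;\theta)$ about $\theta_0$, with an exact mean-value remainder, to obtain
\[
0 = \sum_{i=1}^n S_i(Y_i;\theta_0) + \Big[\sum_{i=1}^n \nabla_\theta S_i^\intercal(Y_i;\tilde{\theta})\Big](\hat{\theta}_n - \theta_0),
\]
where $\tilde{\theta}$ lies (coordinatewise) on the segment joining $\hat{\theta}_n$ and $\theta_0$. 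Rearranging gives
\[
\sqrt{n}(\hat{\theta}_n - \theta_0) = \Big[-\tfrac{1}{n}\sum_{i=1}^n \nabla_\theta S_i^\intercal(Y_i;\tilde{\theta})\Big]^{-1}\cdot \tfrac{1}{\sqrt{n}}\sum_{i=1}^n S_i(Y_i;\theta_0),
\]
so the two factors may be analyzed separately.

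For the numerator, the summands $S_i(Y_i;\theta_0)$ are independent and mean-zero by \ref{reg con 4}, with per-observation covariance $I_i(\theta_0)$ by \ref{reg con 2}. I would establish joint asymptotic normality via the Cram\'er--Wold device: for fixed $\bm{\lambda}\in\R^d$ the scalars $\bm{\lambda}^\intercal S_i(Y_i;\theta_0)$ are independent and mean-zero and satisfy the Lyapunov condition of order $2+\delta$ furnished by \ref{reg con 7}, so the univariate Lyapunov CLT gives $\tfrac{1}{\sqrt{n}}\sum_i \bm{\lambda}^\intercal S_i \overset{d}{\to} \mathcal{N}(0,\bm{\lambda}^\intercal \bar{I}(\theta_0)\bm{\lambda})$, where \ref{reg con 6} supplies convergence of the normalized covariance $\bar{I}_n(\theta_0)\to\bar{I}(\theta_0)$. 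Cram\'er--Wold then upgrades this to $\tfrac{1}{\sqrt{n}}\sum_i S_i(Y_i;\theta_0) \overset{d}{\to} \mathcal{N}_d(0,\bar{I}(\theta_0))$.

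For the denominator, I would show $-\tfrac{1}{n}\sum_i \nabla_\theta S_i^\intercal(Y_i;\tilde{\theta}) \overset{p}{\to} \bar{I}(\theta_0)$. Consistency \ref{reg con 1} squeezes $\tilde{\theta}$ toward $\theta_0$, so it eventually lies in the $\epsilon$-ball of \ref{reg con 8}; the envelope $B_{i,j,u}$ together with its uniform $(1+\delta)$-moment bound then yields a locally uniform weak law of large numbers for the second-derivative array, allowing $\tilde{\theta}$ to be replaced by $\theta_0$, while the information identity \ref{reg con 5} identifies the limit of the expected negative Hessian as $\bar{I}(\theta_0)$, which is positive definite (hence invertible) by \ref{reg con 6}. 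Combining the two factors by Slutsky's theorem and the continuous mapping theorem (matrix inversion being continuous at the positive definite $\bar{I}(\theta_0)$) gives $\sqrt{n}(\hat{\theta}_n-\theta_0)\overset{d}{\to}\mathcal{N}_d(0,\bar{I}(\theta_0)^{-1})$. Finally, left-multiplying by $[\bar{I}_n(\theta_0)]^{1/2}$, which converges to the symmetric root $[\bar{I}(\theta_0)]^{1/2}$ by \ref{reg con 6} and continuity of the matrix square root, and applying Slutsky once more whitens the covariance to $[\bar{I}]^{1/2}[\bar{I}]^{-1}[\bar{I}]^{1/2} = 1_{d\times d}$, producing the claimed limit $Z$.

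The step I expect to be most delicate is the locally uniform law of large numbers for the Hessian evaluated at the random intermediate point $\tilde{\theta}$: because the observations are not identically distributed, no standard i.i.d.\ uniform LLN applies, and I must instead combine the envelope bound \ref{reg con 8}(i), the uniform moment control \ref{reg con 8}(ii), and consistency \ref{reg con 1} to control the fluctuation of $\tfrac{1}{n}\sum_i \nabla_\theta S_i^\intercal(Y_i;\theta)$ uniformly over a shrinking neighborhood of $\theta_0$.
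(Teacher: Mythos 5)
The paper does not actually prove Theorem~\ref{thm:MLE asymtotic normality theorem}: it records it as a classical result and defers to \citet{asymptotic_book} for a proof and to \citet{hoadley1971} for the sufficient conditions in the independent, non-identically distributed case. Your proposal is a correct reconstruction of exactly that classical argument: the score equation, a coordinatewise mean-value expansion, Lyapunov CLT plus Cram\'er--Wold for the score (which is precisely what \ref{reg con 7} is formulated to deliver, with \ref{reg con 6} supplying the limiting covariance), a locally uniform weak law of large numbers for the Hessian using \ref{reg con 8} and consistency \ref{reg con 1}, identification of the limit via \ref{reg con 5}--\ref{reg con 6}, and Slutsky together with continuity of matrix inversion and of the symmetric square root to whiten by $[\bar{I}_n(\theta_0)]^{1/2}$. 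You have also correctly isolated the only genuinely delicate step: replacing the random intermediate point by $\theta_0$ in the averaged second-derivative array requires the uniform $(1+\delta)$-moment control of \ref{reg con 8}(ii) rather than an off-the-shelf i.i.d.\ uniform LLN, and this is exactly the issue that the dominance conditions in \citet{hoadley1971} are designed to handle.
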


\begin{theorem}[\cite{jt}]\label{thm:MLE sequential asymtotic normality theorem}
	Suppose that observations $Y_i$ are independent with distributions~$f_i(y_i; \theta)$, where $\theta$ is $d$-dimensional, and that observations $Y_1, \dots, Y_{n_k}$ are available at analysis $k, k =1, \dots, K$. Let $n_k - n_{k-1} \rightarrow \infty $ such that $(n_k - n_{k-1})/n \rightarrow \gamma_k \in (0,1)$ for all $k = 1, \dots , K$. Furthermore let $\hat{\theta}_k$ denote the MLE of $\theta$ based on $Y_1, \dots, Y_{n_k}$. Suppose that the distributions $f_i$ are sufficiently regular so that \ref{reg con 1}-\ref{reg con 8} hold for each k. Also let $Z \sim \mathcal{N}_q(0_q, 1_{q\times q})$ and $\theta_0^K := [\theta_0^\intercal, \theta_0^\intercal, \dots, \theta_0^\intercal]^\intercal \in \R^q$. Then $\hat{\theta}^K = [\hat{\theta}_1^\intercal, \dots, \hat{\theta}_K^\intercal]^\intercal$ is asymptotically multivariate normal,
	\begin{equation*}
	\sqrt{n}[J_n]^{-\frac{1}{2}}(\hat{\theta}^K - \theta_0^K) \overset{d}{\underset{n\rightarrow \infty}{\longrightarrow}} Z
	\end{equation*}
	where,
	\begin{equation*}
	J_n = J_n(\theta_0) =\begin{bmatrix}
	\bar{I}_n^{-1}(1, \theta_0) & \bar{I}_n^{-1}(2, \theta_0) & \bar{I}_n^{-1}(3, \theta_0) & \dots & \bar{I}_n^{-1}(K, \theta_0) \\
	\bar{I}_n^{-1}(2, \theta_0) & \bar{I}_n^{-1}(2, \theta_0) & \bar{I}_n^{-1}(3, \theta_0) & &\\
	\bar{I}_n^{-1}(3, \theta_0) & \bar{I}_n^{-1}(3, \theta_0) & \bar{I}_n^{-1}(3, \theta_0) & &\\
	& \vdots 				  &						    & \ddots\\
	\bar{I}_n^{-1}(K, \theta_0) & \bar{I}_n^{-1}(K, \theta_0) & \bar{I}_n^{-1}(K,\theta_0) 	& \dots 						& \bar{I}_n^{-1}(K, \theta_0)					
	\end{bmatrix}.
	\end{equation*}
\end{theorem}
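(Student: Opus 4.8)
The plan is to reduce the joint limit law of $\hat\theta^K$ to a central limit theorem for cumulative score sums, exploiting that the score increments over disjoint groups are independent. Write $S_i=S_i(Y_i;\theta_0)$, let $T_{n,k}=n^{-1/2}\sum_{i=1}^{n_k}S_i$ be the normalized cumulative score through group $k$, and recall from \ref{reg con 6} that $\bar I_n(k,\theta_0)=n^{-1}\sum_{i=1}^{n_k}I_i(\theta_0)$ converges to a positive definite limit $\bar I(k,\theta_0)$. Fixing $k$ and repeating the fixed-sample argument behind Theorem~\ref{thm:MLE asymtotic normality theorem} on the first $n_k$ observations, I would Taylor-expand the estimating equation $\sum_{i=1}^{n_k}S_i(Y_i;\hat\theta_k)=0$ about $\theta_0$: consistency \ref{reg con 1} places $\hat\theta_k$ within the radius-$\epsilon$ neighborhood of $\theta_0$ appearing in \ref{reg con 8} with probability tending to one, the domination bounds \ref{reg con 8} control the second-order remainder, and \ref{reg con 5} identifies the expected per-observation log-likelihood Hessian with $-I_i(\theta_0)$, so that $\sum_{i\le n_k}I_i=n\bar I_n(k,\theta_0)$. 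This produces the Bahadur-type representation
\begin{equation*}
\sqrt n(\hat\theta_k-\theta_0)=\bar I_n(k,\theta_0)^{-1}T_{n,k}+o_p(1),\qquad k=1,\dots,K,
\end{equation*}
whence $\sqrt n(\hat\theta^K-\theta_0^K)$ equals, up to an $o_p(1)$ vector, the block-stacked vector whose $k$th block is $\bar I_n(k,\theta_0)^{-1}T_{n,k}$.

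Next I would establish joint normality of the cumulative scores. Since $G_1,\dots,G_K$ partition the indices and the $Y_i$ are independent, the increments $T_{n,k}-T_{n,k-1}=n^{-1/2}\sum_{i\in G_k}S_i$ are independent across $k$, and each is a normalized sum of independent mean-zero vectors by \ref{reg con 4}, with covariance $n^{-1}\sum_{i\in G_k}I_i(\theta_0)=\bar I_n(k,\theta_0)-\bar I_n(k-1,\theta_0)$; this converges by \ref{reg con 6} and is nondegenerate by the growth condition $(n_k-n_{k-1})/n\to\gamma_k$. The Lyapunov-type condition \ref{reg con 7}, together with the Cramér--Wold device and the Lindeberg--Feller theorem, then yields joint asymptotic normality of the increment vector, and summing recovers joint normality of $(T_{n,1},\dots,T_{n,K})$ with $\mathrm{Cov}(T_{n,r},T_{n,s})\to\bar I(\min(r,s),\theta_0)$, using that $T_{n,r}$ and $T_{n,s}-T_{n,r}$ are independent for $r\le s$.

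Writing $M_k=\bar I_n(k,\theta_0)^{-1}T_{n,k}$, for $r\le s$ I would then compute
\begin{equation*}
\mathrm{Cov}(M_r,M_s)=\bar I_n(r,\theta_0)^{-1}\,\bar I_n(\min(r,s),\theta_0)\,\bar I_n(s,\theta_0)^{-1}=\bar I_n(s,\theta_0)^{-1},
\end{equation*}
and symmetrically $\bar I_n(r,\theta_0)^{-1}$ when $r>s$; that is, the $(r,s)$ block is $\bar I_n(\max(r,s),\theta_0)^{-1}$, which is exactly the block structure of $J_n$. Hence the stacked vector has limiting covariance $J=\lim_n J_n$, and combining with the linearization via Slutsky's theorem gives $\sqrt n(\hat\theta^K-\theta_0^K)\overset{d}{\to}\mathcal N_q(0_q,J)$. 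Premultiplying by $J_n^{-1/2}$ and using $J_n\to J$ with the continuous mapping theorem produces the claimed standard normal limit $Z$.

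The main obstacle I anticipate is the linearization: making the remainder $o_p(1)$ \emph{jointly} over all $K$ analyses rather than merely for each fixed $k$. The remainder measures the discrepancy between the realized log-likelihood Hessian at an intermediate point and $-n\bar I_n(k,\theta_0)$, and must be shown uniformly negligible on the shrinking neighborhood containing every $\hat\theta_k$. Because $K$ is fixed and finite, a union bound over $k$ combined with the uniform domination \ref{reg con 8} and consistency \ref{reg con 1} should suffice, but this is the technical crux; the remaining steps are linear algebra and a standard multivariate CLT.
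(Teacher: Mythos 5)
This theorem is not proved in the paper at all: it is stated as background and attributed to \citet{jt}, so there is no in-paper proof to compare your sketch against. Your argument is, however, essentially the classical one behind this result and it is sound: linearize the cumulative score equations to get $\sqrt n(\hat\theta_k-\theta_0)=\bar I_n(k,\theta_0)^{-1}T_{n,k}+o_p(1)$, prove a joint CLT for the score increments $T_{n,k}-T_{n,k-1}$ (which are independent across groups, mean zero by \ref{reg con 4}, and satisfy the Lyapunov condition \ref{reg con 7}), and then check that the covariance of the stacked vector $\bigl(\bar I_n(k,\theta_0)^{-1}T_{n,k}\bigr)_{k}$ has $(r,s)$ block $\bar I_n^{-1}(\max(r,s),\theta_0)$, which is exactly $J_n$. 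Two remarks. First, the ``technical crux'' you flag at the end is not actually a difficulty: since $K$ is fixed, marginal negligibility of each of the $K$ remainders already gives joint negligibility (a maximum of finitely many $o_p(1)$ terms is $o_p(1)$), so the union-bound step you describe is immediate. Second, your claim that the limiting increment covariances are ``nondegenerate by the growth condition $(n_k-n_{k-1})/n\to\gamma_k$'' is too quick: $\gamma_k>0$ together with \ref{reg con 6} only gives that $\bar I(k,\theta_0)-\bar I(k-1,\theta_0)$ is positive \emph{semi}definite, and strict positive definiteness of these increments is exactly what is needed for $J_n$ (and its limit $J$) to be invertible so that $J_n^{-1/2}\to J^{-1/2}$ and Slutsky can be applied. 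This nondegeneracy must be read as part of the regularity assumed in the theorem (it is implicit in the statement, since otherwise $J_n^{-1/2}$ need not exist); flagging it as an assumption rather than deriving it from $\gamma_k>0$ would make the sketch airtight.
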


\section{A Bound to the Normal for Group Sequential Maximum Likelihood Estimators}\label{sec:main-result}
The asymptotic theory of group sequential MLEs in Theorem~\ref{thm:MLE sequential asymtotic normality theorem} due to  \citet{jt} guarantee asymptotic normality under the appropriate conditions.  In this section we present results that give an error bound for the normal approximation under the same conditions.  In Section~\ref{subsec:main.result} we present the main result in Theorem~\ref{my_result_1}, whose proof is delayed until Appendix~\ref{sec:mle_non_asymptotic_proof} but after the theorem we outline an argument showing that the bound is asymptotically $\mathcal{O}(n^{-1/2})$. In Section~\ref{section proposed research} we discuss a result of \citet{gaunt_new_bounds_on_solutions_moment_conditions} allowing a relaxation on the number of needed derivatives of the test function~$h$, and apply this to achieve a similar relaxation to Theorem~\ref{RR_exchangeable_pair_bound} in Theorem~\ref{thm:my-result-1-improved}, which may be of independent interest.  Then in Theorem~\ref{thm:my-result-1-improved} we pass this relaxation along to Theorem~\ref{my_result_1}.

\subsection{Main Result}\label{subsec:main.result}

For ease of presentation we introduce the following additional notation. For $\theta_0^K := [\theta_0^\intercal, \theta_0^\intercal, \dots, \theta_0^\intercal]^\intercal \in \R^q$, let $Q_i := [\hat{\theta}^K - \theta_0^K]_i$ and $Q_{(m)} :=\max_{i \in \{1, \dots, q\}} Q_i$. Let $Y_i'$ be an independent copy of $Y$, 
$$\xi_{ij} = [n^{-\frac{1}{2}} S_i(Y_i, \theta_0)]_j, \qmq{and} \xi_{ij}' = [n^{-\frac{1}{2}} S_i(Y_i', \theta_0)]_j.$$	
Define 
\begin{align*}
A &= \begin{bmatrix}
1_{d \times d} & 0 & 0 & \dots & 0 \\
1_{d \times d} & 1_{d \times d} & 0 & &0 \\
1_{d \times d} & 1_{d \times d} & 1_{d \times d} & & 0 \\
& \vdots & & \ddots \\
1_{d \times d} & 1_{d \times d} & 1_{d \times d} & \dots & 1_{d \times d}
\end{bmatrix} \in \R^{q \times q},\\
\tilde{J_n} = \tilde{J_n}(\theta) &= \begin{bmatrix}
\bar{I}_n(1, \theta) & \bar{I}_n(1, \theta) & \bar{I}_n(1, \theta) & & \bar{I}_n(1, \theta) \\
\bar{I}_n(1, \theta) & \bar{I}_n(2, \theta) & \bar{I}_n(2, \theta) &\dots &\bar{I}_n(2, \theta) \\
\bar{I}_n(1, \theta) & \bar{I}_n(2, \theta) & \bar{I}_n(3, \theta) & & \bar{I}_n(3, \theta)\\
& \vdots 				  &						    & \ddots\\
\bar{I}_n(1, \theta) & \bar{I}_n(2, \theta) & \bar{I}_n(3,\theta) 	& \dots 						& \bar{I}_n(K, \theta)					
\end{bmatrix} \in \R^{q \times q}
\end{align*}
where $1_{d \times d}$ is the $d$-dimensional identity matrix. We denote the $n$th row and $m$th column of a matrix $W$ as
\begin{align*}
n\text{th row: }& W_{n *} =W_{n, *} \\
m\text{th columns: }& W_{* m} =W_{*, m}.
\end{align*} 
When $W$ is defined as a block matrix, we let $W_{[i] [j]}$ be the sub matrix in `block row' $i$ and `block column' $j$. Similarly when $W \in \R^q$ is defined as a block vector, we let $W_{[i]} \in \R^d$ be the $i$th sub vector. We denote the $n$th block row and $m$th block column as 
\begin{align*}
n\text{th block row: }& W_{[n] [*]} = W_{[n], [*]} \\
m\text{th block columns: }& W_{[*] [m]} = W_{[*], [m]}.
\end{align*}
We have the notation in place to state the main result of the paper.
\begin{theorem}\label{my_result_1}
	Let $Y_1, Y_2, \dots, Y_n$ be independent but possibly non-identically distributed $\R^t$-valued random vectors with probability density (or mass) functions $f_i(y_i | \theta)$, for which the parameter space $\Theta$ is an open subset of $\R^d$. Suppose the observations $Y_1, Y_2, \dots, Y_{n_k}$ are available at analysis $k= 1, \dots, K$. Assume that the \textit{MLE} $\hat{\theta}_k$ exists and is unique and that conditions \ref{reg con 1}-\ref{reg con 6} are satisfied at each analysis $k$. In addition, assume that for any $\theta_0$ there exists $0<\epsilon = \epsilon(\theta_0)$ and functions $M^k_{iuj}(y)$, $\forall i, u, j=1, 2, \dots, d$ such that $\abs{ \frac{\partial^3}{\partial \theta_i \partial \theta_u \partial \theta_j} \ell_k(\theta, y) } \leq M^k_{iuj}(y)$ for all $\theta \in \Theta$ with $\abs{\theta_j - \theta_{0, j}} < \epsilon \; \forall j =1, 2, \dots, d$. Also, assume that $\mathbb{E} [\big (\frac{\partial^3}{\partial \theta_i\partial \theta_u \partial \theta_j} M^k_{iuj}(Y) \big )^2 \mid \abs{Q_{(m)}} < \epsilon ] < \infty$ for all $k=1, \dots, K$. Let $\{Y'_i, i = 1, 2, \dots, n \}$ be an independent copy of $\{Y_i, i = 1, 2, \dots, n \}$. For $Z \sim N_q(0_q, 1_{q \times q}), h \in \mathcal{H}$, where $\mathcal{H}$ is as in \eqref{H}, it holds that
	\begin{multline}\label{my_result_1_statement}
	\abs{\mathbb{E} [h(\sqrt{n} J^{-\frac{1}{2}}_n (\hat{\theta}^K - \theta_0^K))] - \mathbb{E} [ h(Z)]} \leq \frac{\norm{h}_1}{\sqrt{n}} K_1(\theta_0) \\ + \frac{ q^2 c^2 \norm{h}_2}{4} K_2(\theta_0) + \frac{  q^3 c^3 \norm{h}_3}{12} K_3(\theta_0) + \frac{2\norm{h}}{\epsilon^2} \mathbb{E} \big [\sum_{j = 1}^{q}Q_{j}^2 \big],
	\end{multline}
	where
	\begin{align*}
	&K_1(\theta_0) = \sum_{k_1 = 1}^{K} \sum_{k_2 = k_1}^{\min\{k_1 + 1, K\}} \sum_{l = 1}^{d} \sum_{j = 1}^{d} \abs{\bar{I}_n^{-\frac{1}{2}}(G_{k_2}; \theta_0)_{lj}} \Bigg \{ \Bigg (  \\
	&\sum_{i = 1}^{d} \sqrt{ \mathbb{E} [[\hat{\theta}_{[k_1]} - \theta_0]_i]^2 \mathbb{E} \Big (\frac{\partial^2}{\partial \theta_i \partial \theta_l} \ell_{k_1}(\theta_0, Y) + n\bar{I}_n(k_1, \theta_0)_{li}\Big)^2 }   \Bigg )  \\
	&+ \frac{1}{2} \sum_{i = 1}^{d}\sum_{u = 1}^{d} \big (\mathbb{E} \big \{([\hat{\theta}_{[k_1]} - \theta_0]_i)^2 ([\hat{\theta}_{[k_1]} - \theta_0]_u)^2 \big \} \big )^{\frac{1}{2}} (\mathbb{E} [ \big (M^{k_1}_{iul}(Y) \big)^2 \mid \abs{Q_{(m)}} < \epsilon ] )^\frac{1}{2} \Bigg\} \\
	&K_2(\theta_0) = \sum_{k = 1}^{K} \Big\{ \sum_{j = 1}^d \Big [ \sum_{i \in G_k} \Var{\xi_{ij}^2} \Big ]^{\frac{1}{2}} + 2 \sum_{i < j} \Big [ \sum_{v \in G_k} \mathrm{Var} [ \xi_{vi} \xi_{vj}] \Big ]^ {\frac{1}{2}} \Big \} \\
	&K_3(\theta_0) = \sum_{i = 1}^{n} \mathbb{E} \Big [ \sum_{j = 1}^d \abs{(\xi'_{ij} - \xi_{ij})}  \Big ]^3\\
	&c = \max_{k \in \{1, \dots, K\}} \abs{\bar{I}^{-1/2}_n(G_k;\theta_0)}.
	\end{align*}
\end{theorem}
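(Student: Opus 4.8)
The plan is to bound the left-hand side by a triangle inequality through an intermediate \emph{linearized} statistic $W$ that interpolates between the normalized MLE $\hat{W}:=\sqrt{n}J_n^{-1/2}(\hat{\theta}^K-\theta_0^K)$ and the target $Z$. Writing $\xi_i:=n^{-1/2}S_i(Y_i,\theta_0)\in\R^d$ for the per-observation normalized scores (so $\xi_{ij}=[\xi_i]_j$), I define the block vector $U\in\R^q$ of cumulative scores by $U_{[k]}:=\sum_{i=1}^{n_k}\xi_i$ and set
\[
W := J_n^{-1/2}\,\mathrm{blockdiag}\big(\bar{I}_n^{-1}(1,\theta_0),\dots,\bar{I}_n^{-1}(K,\theta_0)\big)\,U .
\]
The motivation is the delta-method heuristic $\sqrt{n}(\hat{\theta}_k-\theta_0)\approx\bar{I}_n^{-1}(k,\theta_0)U_{[k]}$, read off from the score equation $\nabla\ell_k(\hat{\theta}_k)=0$ together with $-n^{-1}\nabla^2\ell_k(\theta_0)\approx\bar{I}_n(k,\theta_0)$; thus $W$ is the leading linear term of $\hat{W}$, expressed as a fixed linear image of the independent summands $\xi_1,\dots,\xi_n$. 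Then
\[
\abs{\mathbb{E}h(\hat{W})-\mathbb{E}h(Z)}\le \abs{\mathbb{E}h(\hat{W})-\mathbb{E}h(W)}+\abs{\mathbb{E}h(W)-\mathbb{E}h(Z)},
\]
and I will bound the second (Stein) term by $\tfrac{q^2c^2\norm{h}_2}{4}K_2+\tfrac{q^3c^3\norm{h}_3}{12}K_3$ and the first (linearization) term by $\tfrac{\norm{h}_1}{\sqrt{n}}K_1+\tfrac{2\norm{h}}{\epsilon^2}\mathbb{E}[\sum_{j=1}^q Q_j^2]$.

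Two exact structural identities make the Stein step clean. First, using independence of the $Y_i$ and conditions \ref{reg con 2}--\ref{reg con 6}, one gets $\Cov{U_{[k_1]}}{U_{[k_2]}}=\bar{I}_n(\min\{k_1,k_2\},\theta_0)$, so $\mathbb{E}[UU^\intercal]=\tilde{J}_n$; a block computation using $\bar{I}_n^{-1}(k_1)\bar{I}_n(k_1)\bar{I}_n^{-1}(k_2)=\bar{I}_n^{-1}(\max\{k_1,k_2\})$ then gives $\mathrm{blockdiag}(\bar{I}_n^{-1}(k))\,\tilde{J}_n\,\mathrm{blockdiag}(\bar{I}_n^{-1}(k))=J_n$, whence $\mathbb{E}[WW^\intercal]=J_n^{-1/2}J_nJ_n^{-1/2}=1_{q\times q}$ \emph{exactly}. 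Consequently the Gaussian in Theorem~\ref{RR_exchangeable_pair_bound} is $\Sigma^{1/2}Z=Z$, matching the statement. Second, I construct the exchangeable pair by drawing $I$ uniformly on $\{1,\dots,n\}$, replacing $Y_I$ by its independent copy $Y_I'$, and letting $W'$ be the corresponding linear image ($\xi_I\mapsto\xi_I'$). Writing $W=\sum_{i=1}^n a_i$ for the induced decomposition into independent, mean-zero contributions $a_i=a_i(Y_i)$, we get $\mathbb{E}[W'-W\mid W]=-\tfrac1n\mathbb{E}[\sum_i a_i\mid W]=-\tfrac1n W$, so the linearity condition \eqref{linearity condition multivariate} holds with $\Lambda=\tfrac1n 1_{q\times q}$ and $R=0$; in particular the term $C$ of Theorem~\ref{RR_exchangeable_pair_bound} vanishes, explaining the absence of an $R$-variance contribution.

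With $\Lambda^{-1}=n\,1_{q\times q}$, hence $\lambda^{(i)}=n$, I expand the quantities $A$ and $B$ of Theorem~\ref{RR_exchangeable_pair_bound} for the increment $W'-W$, which equals a single row-block of $J_n^{-1/2}\mathrm{blockdiag}(\bar{I}_n^{-1}(k))$ applied to $\xi_I'-\xi_I$. Bounding each entry of this coefficient matrix by $c=\max_k\abs{\bar{I}_n^{-1/2}(G_k;\theta_0)}$ and counting the $q$ coordinates produces the prefactors $q^2c^2$ and $q^3c^3$, while the factor $n$ from $\lambda^{(i)}$ cancels against the $n^{-1/2}$ normalization inside the $\xi$'s; after organizing the resulting conditional variances and third absolute moments according to which group $G_k$ the replaced index falls in, $A$ and $B$ collapse to $K_2$ and $K_3$ respectively. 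The $A$-computation is where the two pieces of $K_2$ arise, namely the diagonal $\Var{\xi_{ij}^2}$ terms and the off-diagonal $\Var{\xi_{vi}\xi_{vj}}$ cross terms.

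The linearization term is the main obstacle, handled by a multivariate Taylor expansion of the score equation carried out jointly across the $K$ analyses. On the good event $\{\abs{Q_{(m)}}<\epsilon\}$, expanding $0=\nabla\ell_k(\hat{\theta}_k)$ about $\theta_0$ to second order expresses $\hat{\theta}_k-\theta_0$ through $\nabla\ell_k(\theta_0)$, the observed-information fluctuation $\nabla^2\ell_k(\theta_0)+n\bar{I}_n(k,\theta_0)$, and a third-order remainder dominated by $M^k_{iuj}$; subtracting the definition of $W$, applying $\abs{h(\hat{W})-h(W)}\le\norm{h}_1\sum_{j=1}^q\abs{[\hat{W}-W]_j}$, and using Cauchy--Schwarz on each product yields the two families of terms in $K_1$ (the second-derivative fluctuation term and the $M^{k}_{iul}$ remainder term), with the banded index range $k_2\in\{k_1,\min\{k_1+1,K\}\}$ emerging from the band structure of the precision matrix of the nested design encoded in $J_n^{-1/2}$. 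On the complementary bad event the integrand is bounded crudely by $2\norm{h}$ and its probability by Chebyshev, $\mathbb{P}(\abs{Q_{(m)}}\ge\epsilon)\le\epsilon^{-2}\mathbb{E}[\sum_{j=1}^q Q_j^2]$, giving the final term. The delicate points will be the uniform control of the Taylor remainders via the dominating functions $M^k_{iuj}$ on the good event, the faithful propagation of the whitening/block structure so that only adjacent groups survive in $K_1$, and the clean excision of the bad event throughout.
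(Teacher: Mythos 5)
Your proposal is correct and takes essentially the same approach as the paper: the same triangle inequality through the same intermediate statistic (your $W=J_n^{-1/2}\,\mathrm{blockdiag}(\bar{I}_n^{-1}(1,\theta_0),\dots,\bar{I}_n^{-1}(K,\theta_0))\,U$ coincides with the paper's $n^{-1/2}\tilde{J}_n^{-1/2}[S(1,\theta_0)^\intercal,\dots,S(K,\theta_0)^\intercal]^\intercal$ via the identity $\tilde{J}_n^{-1/2}=J_n^{-1/2}(J_n^*)^{-1}$), the same uniform single-observation-replacement exchangeable pair with $\Lambda=\tfrac{1}{n}1_{q\times q}$ and $R=0$ in Theorem~\ref{RR_exchangeable_pair_bound}, and the same score-equation Taylor expansion with Cauchy--Schwarz plus a Chebyshev bound on the event $\{\abs{Q_{(m)}}\ge\epsilon\}$. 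The only cosmetic difference is that you whiten the random vector and apply Theorem~\ref{RR_exchangeable_pair_bound} with $\Sigma=1_{q\times q}$ and test function $h$, whereas the paper applies it to the unwhitened group-increment scores with $\Sigma=\diag{\bar{I}_n(G_1,\theta_0),\dots,\bar{I}_n(G_K,\theta_0)}$ and the transformed test function $\tilde{h}(w)=h(\tilde{J}_n^{-1/2}Aw)$, moving the factors $q^2c^2$ and $q^3c^3$ through the chain rule rather than through the increments.
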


The theorem is proved in Appendix~\ref{sec:mle_non_asymptotic_proof}.

Although the bound in the theorem appears complex, the fact that it is in general of optimal order $\mathcal{O}(n^{-1/2})$ is not hard to see. Assume that $[\bar{I}_n(k, \theta_0)]_j = \mathcal{O}(1)$. By Theorem~\ref{thm:MLE sequential asymtotic normality theorem},
\begin{equation*}
\sqrt{n}\mathbb{E}[\hat{\theta}_{k} - \theta_0] \rightarrow 0_d.
\end{equation*}
Thus $\mathbb{E}[\hat{\theta}_{k} - \theta_0]_j =o(1/\sqrt{n})$. Again from Theorem \ref{thm:MLE sequential asymtotic normality theorem},
\begin{equation*}
n \bar{I}^\frac{1}{2}_n(k, \theta_0)\mathrm{Cov}[\hat{\theta}_k]\bar{I}^\frac{1}{2}_n(k, \theta_0) \rightarrow 1_{d \times d}.
\end{equation*}
Using $[\bar{I}_n(k, \theta_0)]_j = \mathcal{O}(1)$, we see that $n\Var{[\hat{\theta_k}]_j} \rightarrow 1$ and thus $\Var{[\hat{\theta_k}]_j} = \mathcal{O}(1/n)$. It follows that
\begin{equation}\label{big_O1}
\mathbb{E} ([\hat{\theta}_k - \theta_0]^2_j) = \Var{[\hat{\theta_k}]_j} + (\mathbb{E}[\hat{\theta}_{k} - \theta_0]_j)^2 = \mathcal{O} (n^{-1} ).
\end{equation}
	
From \ref{reg con 2} and \ref{reg con 6} and independence of $Y_1, \dots, Y_{n_k}$ it can be seen that,
\begin{equation}\label{big_O2}
\mathbb{E} \Big (\frac{\partial^2}{\partial \theta_i \partial \theta_l} \ell_{k}(\theta_0, Y) + n\bar{I}_n(k, \theta_0)_{li}\Big)^2 = \sum_{j = 1}^{n_k} \Var{\frac{\partial^2}{\partial \theta_i \partial \theta_l} \log(f_j(Y_j \mid \theta_0))}
\end{equation}
showing that the left hand side of the above equation is $\mathcal{O}(n)$. It follows from \eqref{big_O1}, \eqref{big_O2}, and $[\bar{I}_n(k, \theta_0)]_j = \mathcal{O}(1)$ that $K_1(\theta_0)= \mathcal{O}(1)$ and $K_3(\theta_0)= \mathcal{O}(n^{-\frac{1}{2}})$. That $K_2(\theta_0)= \mathcal{O}(n^{-\frac{1}{2}})$ depends on the fourth moment of $\xi_{ij}$ being $\mathcal{O}\left(\frac{1}{n^2}\right)$. In general we have reason to believe this is true but have not proved it. Below we show for exponential families that $K_2$ is indeed $\mathcal{O}(n^{-1/2})$. When all this holds, the right hand side of \eqref{my_result_1_statement} is indeed $\mathcal{O}(n^{-1/2})$. For extensive details of the order of the bound one may refer to the analysis in Section~\ref{sec:mle-exponential-fam-bound} for the simpler case of exponential families.

\subsection{Extensions of Theorem~\ref{my_result_1}} \label{section proposed research}

Theorem~\ref{RR_exchangeable_pair_bound}, which is the key tool in proving our Theorem~\ref{my_result_1}, relies on \eqref{properties of the solution multivariate} to bound the derivatives of the multivariate Stein solution.  \citet[][Proposition~2.1]{gaunt_new_bounds_on_solutions_moment_conditions} found new bounds on the derivatives of the Stein solution that require one fewer derivative of the function~$h$ in \eqref{properties of the solution multivariate}. In Theorem~\ref{thm:RR-improved}, we use this result to relax the conditions of Theorem~\ref{RR_exchangeable_pair_bound} by requiring that $h$ be two times differentiable instead of three. The price paid is an increase in the order of the bound with respect to $d$ by a factor of $d^{1/2}$.
\begin{theorem}\label{thm:RR-improved}
	Assume that $(W, W')$ is an exchangeable pair of $\R^d$-valued random vectors such that 
	\begin{equation*}
	\mathbb{E} W = 0, \quad \mathbb{E} WW^\intercal = \Sigma
	\end{equation*}
	with $\Sigma \in \R^{d \times d}$ symmetric and positive definite. Suppose further that \eqref{linearity condition multivariate} is satisfied for an invertible matrix $\Lambda$ and a $\sigma(W)$-measurable random vector $R$. Then, if $Z$ has $d$-dimensional standard normal distribution, we have for every two times differentiable function $h$, 
	\begin{align*}
	\abs{\mathbb{E} h(W) - \mathbb{E} h (\Sigma^{\frac{1}{2}} Z)} &\leq 
	d^{1/2}\abs{\Sigma^{-\frac{1}{2}}}\Bigg (\frac{\norm{h}_1}{\sqrt{\pi}}A + \frac{\norm{h}_2 \sqrt{2\pi}}{8}B \\ 
	&+ \Big (\sqrt{\frac{\pi}{2}}\norm{h-\mathbb{E}h(\Sigma^{\frac{1}{2}} Z)} + \frac{2d}{\sqrt{\pi}} \abs{\Sigma}^{1/2} \norm{h} \Big ) C \Bigg),
	\end{align*}
	where, with $\lambda^{(i)} := \sum_{m = 1}^{d} \abs{(\Lambda^{-1})_{m,i}}$,
	\begin{align*}
	A &= \sum_{i, j = 1}^{d} \lambda^{(i)} \sqrt{\Var{\mathbb{E} \{(W_i' - W_i)(W_j' - W_j) \mid W \}}}, \\
	B &= \sum_{i, j, k = 1}^{d} \lambda^{(i)} \mathbb{E} \abs{(W_i' - W_i)(W_j' - W_j)(W_k' - W_k)}, \\
	C &= \sum_{i = 1}^{d} \lambda^{(i)} \sqrt{\Var{R_i}}
	\end{align*}
\end{theorem}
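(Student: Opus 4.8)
The plan is to reproduce the proof of Theorem~\ref{RR_exchangeable_pair_bound} essentially verbatim and to change only the single step in which the derivatives of the Stein-equation solution are estimated by those of the test function. Everything up to and including the reduction to the quantities $A$, $B$, and $C$ is insensitive to how many derivatives $h$ possesses; the three-times-differentiability in Theorem~\ref{RR_exchangeable_pair_bound} enters solely through the use of \eqref{properties of the solution multivariate}. Replacing \eqref{properties of the solution multivariate} by the sharper bounds of \citet[Proposition~2.1]{gaunt_new_bounds_on_solutions_moment_conditions}, which control the $k$th derivative of the solution by the $(k-1)$th derivative of $h$, is therefore all that is needed.

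First I would recall how the constants $A$, $B$, $C$ arise. Writing $Nh = \E{h(\Sigma^{1/2} Z)}$ and letting $f$ solve the relevant Stein equation, one uses the linearity condition \eqref{linearity condition multivariate} to substitute $W = \Lambda^{-1}(R - \E{W' - W \mid W})$ into $\E{W \cdot \nabla f(W)}$, and exploits the antisymmetry of $(W'_m - W_m)(\partial_i f(W) + \partial_i f(W'))$ under exchange of $W$ and $W'$ to rewrite $\E{(W'_m - W_m)\partial_i f(W)}$ as $-\tfrac12 \E{(W'_m - W_m)(\partial_i f(W') - \partial_i f(W))}$. A second-order Taylor expansion of $\partial_i f(W') - \partial_i f(W)$ about $W$ then splits the estimate into three groups: a leading quadratic term which, after combining with the Hessian term of the Stein equation and using $\E{WW^\intercal} = \Sigma$, is controlled by the conditional-variance quantity $A$ multiplied by $\norm{f}_2$; a Taylor remainder controlled by the third-moment quantity $B$ multiplied by $\norm{f}_3$; and the contribution of the error vector $R$, controlled through Cauchy--Schwarz (using $\E R = 0$) by $C$ multiplied by a combination of $\norm{f}_1$ and $\norm{f}_2$.

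Next, in place of $\norm{f}_1 \le \norm{h}_1$, $\norm{f}_2 \le \tfrac12\norm{h}_2$, and $\norm{f}_3 \le \tfrac13\norm{h}_3$ coming from \eqref{properties of the solution multivariate}, I would insert the Gaunt bounds. These are obtained from the Ornstein--Uhlenbeck (Mehler) representation of $f$, in which one differentiation of $h$ is transferred onto the Gaussian kernel by Gaussian integration by parts; each such transfer sheds one derivative of $h$ at the cost of a factor $\abs{\Sigma^{-1/2}}$, a summation over a coordinate index contributing $d^{1/2}$, and an explicit numerical constant coming from the time integral. Matching the resulting estimates to the three groups above gives the claimed coefficients: the second derivative of $f$ feeding $A$ is bounded in terms of $\norm{h}_1$ (constant $1/\sqrt{\pi}$ after the $\tfrac12$ from the expansion), the third derivative feeding $B$ in terms of $\norm{h}_2$ (constant $\sqrt{2\pi}/8$), and the first- and second-derivative pieces feeding $C$ in terms of $\norm{h - Nh}$ (constant $\sqrt{\pi/2}$) and $\norm{h}$ (constant $2d/\sqrt{\pi}$, the extra $d$ and $\abs{\Sigma}^{1/2}$ already being present in the original $C$ coefficient). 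Factoring the common $d^{1/2}\abs{\Sigma^{-1/2}}$ to the front and collecting then yields the stated inequality.

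I expect the main obstacle to be bookkeeping rather than anything conceptual: one must apply the correct member of Gaunt's family of bounds at each occurrence so that the numerical constants $1/\sqrt{\pi}$, $\sqrt{2\pi}/8$, $\sqrt{\pi/2}$, and $2d/\sqrt{\pi}$ emerge exactly, being careful to use the centered $\norm{h - Nh}$ (rather than $\norm{h}$) for the first-derivative term and to track where each factor of $d$ and of $\abs{\Sigma}^{1/2}$ versus $\abs{\Sigma^{-1/2}}$ belongs. A secondary point is to confirm that the hypotheses under which \citet[Proposition~2.1]{gaunt_new_bounds_on_solutions_moment_conditions} holds --- $\Sigma$ symmetric positive definite and $h$ twice differentiable with bounded derivatives --- match those assumed here, so that no additional smoothness of $h$ is reintroduced through the back door.
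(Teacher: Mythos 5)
Your proposal is correct and follows exactly the route the paper takes: the paper's proof of Theorem~\ref{thm:RR-improved} is precisely to rerun the proof of Theorem~\ref{RR_exchangeable_pair_bound} with \citet[Proposition~2.1]{gaunt_new_bounds_on_solutions_moment_conditions} substituted for \eqref{properties of the solution multivariate} at the single step bounding derivatives of the Stein solution by derivatives of $h$, which is what you describe (your account of how $A$, $B$, $C$ arise and where the constants $1/\sqrt{\pi}$, $\sqrt{2\pi}/8$, $\sqrt{\pi/2}$, $2d/\sqrt{\pi}$, and the factor $d^{1/2}\abs{\Sigma^{-1/2}}$ come from is consistent with this). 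In fact your write-up supplies more detail than the paper, which omits these bookkeeping steps entirely.
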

\begin{proof}
	The proof is similar to the proof of Theorem~\ref{RR_exchangeable_pair_bound}, with  \citet[][Proposition~2.1]{gaunt_new_bounds_on_solutions_moment_conditions} used in place of \eqref{properties of the solution multivariate}. The details are this omitted. 
\end{proof}

We now pass along the improvement of this bound to enhance our main result, Theorem \ref{my_result_1}. 

\begin{theorem}\label{thm:my-result-1-improved}
	Let $Y_1, Y_2, \dots, Y_n$ be independent non-identically distributed $\R^t$-valued random vectors with probability density (or mass) functions $f_i(y_i | \theta)$, for which the parameter space $\Theta$ is an open subset of $\R^d$. Suppose the observations $Y_1, Y_2, \dots, Y_{n_k}$ are available at analysis $k= 1, \dots, K$. Assume that the \textit{MLE} $\hat{\theta}_k$ exists and is unique and that conditions \ref{reg con 1}-\ref{reg con 6} are satisfied at each analysis $k$. In addition, assume that for any $\theta_0$ there exists $0<\epsilon = \epsilon(\theta_0)$ and functions $M^k_{iuj}(y)$, $\forall i, u, j=1, 2, \dots, d$ such that $\abs{ \frac{\partial^3}{\partial \theta_i \partial \theta_u \partial \theta_j} \ell_k(\theta, y) } \leq M^k_{iuj}(y)$ for all $\theta \in \Theta$ with $\abs{\theta_j - \theta_{0, j}} < \epsilon \; \forall j =1, 2, \dots, d$. Also, assume that $\mathbb{E} [\big (\frac{\partial^3}{\partial \theta_i\partial \theta_u \partial \theta_j} M^k_{iuj}(Y) \big )^2 \mid \abs{Q_{(m)}} < \epsilon ] < \infty$ for all $k=1, \dots, K$. Let $\{Y'_i, i = 1, 2, \dots, n \}$ be an independent copy of $\{Y_i, i = 1, 2, \dots, n \}$. For $Z \sim N_q(0_q, 1_{q \times q}), h \in \mathcal{H}$, where $\mathcal{H}$ is the class of all bounded functions with bounded first and second order derivatives, it holds that
	\begin{multline*}
	\abs{\mathbb{E} [h(\sqrt{n} J^{-\frac{1}{2}}_n (\hat{\theta}^K - \theta_0^K))] - \mathbb{E} [ h(Z)]} \leq \frac{\norm{h}_1}{\sqrt{n}} K_1(\theta_0) \\ + \frac{ q^{3/2} c^2 \norm{h}_1}{\sqrt{\pi}} K_2(\theta_0) + \frac{\sqrt{2 \pi}  q^{5/2} c^3 \norm{h}_2}{8} K_3(\theta_0) + \frac{2\norm{h}}{\epsilon^2} \mathbb{E} \big [\sum_{j = 1}^{q}Q_{j}^2 \big],
	\end{multline*}
	where,
	\begin{align*}
	&K_1(\theta_0) = \sum_{k_1 = 1}^{K} \sum_{k_2 = k_1}^{\min\{k_1 + 1, K\}} \sum_{l = 1}^{d} \sum_{j = 1}^{d} \abs{\bar{I}_n^{-\frac{1}{2}}(G_{k_2}; \theta_0)_{lj}} \Bigg \{ \Bigg ( \\
	&\sum_{i = 1}^{d} \sqrt{ \mathbb{E} [[\hat{\theta}_{[k_1]} - \theta_0]_i]^2 \mathbb{E} \Big (\frac{\partial^2}{\partial \theta_i \partial \theta_l} \ell_{k_1}(\theta_0, Y) + n\bar{I}_n(k_1, \theta_0)_{li}\Big)^2 }   \Bigg ) \\
	&+ \frac{1}{2} \sum_{i = 1}^{d}\sum_{u = 1}^{d} \big (\mathbb{E} \big \{([\hat{\theta}_{[k_1]} - \theta_0]_i)^2 ([\hat{\theta}_{[k_1]} - \theta_0]_u)^2 \big \} \big )^{\frac{1}{2}} (\mathbb{E} [ \big (M^{k_1}_{iul}(Y) \big)^2 \mid \abs{Q_{(m)}} < \epsilon ] )^\frac{1}{2} \Bigg\} \\
	&K_2(\theta_0) = \sum_{k = 1}^{K} \Big\{ \sum_{j = 1}^d \Big [ \sum_{i \in G_k} \Var{\xi_{ij}^2} \Big ]^{\frac{1}{2}} + 2 \sum_{i < j} \Big [ \sum_{v \in G_k} \mathrm{Var} [ \xi_{vi} \xi_{vj}] \Big ]^ {\frac{1}{2}} \Big \} \\
	&K_3(\theta_0) = \sum_{i = 1}^{n} \mathbb{E} \Big [ \sum_{j = 1}^d \abs{(\xi'_{ij} - \xi_{ij})}  \Big ]^3\\
	&c = \max_{k \in \{1, \dots, K\}} \abs{\bar{I}^{-1/2}_n(G_k;\theta_0)}.
	\end{align*}
\end{theorem}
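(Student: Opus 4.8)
The plan is to rerun the proof of Theorem~\ref{my_result_1} from Appendix~\ref{sec:mle_non_asymptotic_proof} essentially line for line, replacing the single invocation of Theorem~\ref{RR_exchangeable_pair_bound} by Theorem~\ref{thm:RR-improved}. Recall the skeleton of that argument. On the event $\{\abs{Q_{(m)}} \ge \epsilon\}$ one controls the contribution by Markov's inequality, producing the truncation term $\frac{2\norm{h}}{\epsilon^2}\mathbb{E}[\sum_{j} Q_j^2]$; this uses only $\norm{h}$ and is untouched. On the complementary event one Taylor-expands each score equation $\nabla\ell_k(\hat\theta_k)=0$ about $\theta_0$ and writes the standardized MLE $\sqrt{n}J_n^{-1/2}(\hat\theta^K-\theta_0^K)$ as $\mathcal{L}W$ plus a remainder, where $W$ is the standardized cumulative score vector with blocks $W_{[k]} = n^{-1/2}\sum_{i\le n_k} S_i(Y_i;\theta_0)$ and $\mathcal{L}$ is a fixed linear map assembled from the $\bar{I}_n^{-1/2}$ blocks. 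The remainder contributes the first-order term $\frac{\norm{h}_1}{\sqrt n}K_1(\theta_0)$, which uses only $\norm{h}_1$ and is likewise unchanged. Thus $K_1$ and the truncation term carry over verbatim, and only the two middle terms need to be re-derived.

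Next I would isolate the quantity to which the exchangeable-pair bound is applied, namely $\abs{\mathbb{E}\tilde h(W) - \mathbb{E}\tilde h(\Sigma^{1/2}Z)}$ with $\tilde h := h\circ\mathcal{L}$ and $\Sigma := \mathbb{E}[WW^\intercal]$. A short computation using \ref{reg con 4} and independence shows that $\Sigma$ coincides with $\tilde{J_n}(\theta_0)$, and since $\mathcal{L}\Sigma^{1/2}$ is orthogonal by construction one has $\mathbb{E}\tilde h(\Sigma^{1/2}Z) = \mathbb{E} h(Z)$, so this is exactly the quantity we want to bound. The exchangeable pair is the standard one: draw an index uniformly from $\{1,\dots,n\}$ and replace the corresponding $Y_i$ by its independent copy $Y_i'$. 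Because $W$ is an exact sum of independent, mean-zero score contributions, this coupling satisfies \eqref{linearity condition multivariate} with $\Lambda = n^{-1} 1_{q\times q}$ and, crucially, $R\equiv 0$, whence $C = 0$.

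The vanishing of $C$ is the one structural point that must be checked, and it is what makes the relaxation clean: in Theorem~\ref{thm:RR-improved} the constant $C$ multiplies the two sup-norm expressions $\sqrt{\pi/2}\,\norm{h-\mathbb{E}h(\Sigma^{1/2}Z)}$ and $\frac{2d}{\sqrt\pi}\abs{\Sigma}^{1/2}\norm{h}$, so with $C=0$ these disappear and the bound collapses to its $A$- and $B$-terms, precisely as in Theorem~\ref{my_result_1}. Applying Theorem~\ref{thm:RR-improved} with $d=q$ then replaces $\frac{\norm{\tilde h}_2}{4}A$ by $\frac{\norm{\tilde h}_1}{\sqrt\pi}A$ and $\frac{\norm{\tilde h}_3}{12}B$ by $\frac{\sqrt{2\pi}\,\norm{\tilde h}_2}{8}B$, at the price of the prefactor $q^{1/2}\abs{\Sigma^{-1/2}}$. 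The two reductions $A\le K_2(\theta_0)$ and $B\le K_3(\theta_0)$ depend only on independence and the definition of $\xi_{ij}$ and do not involve $h$, so they transfer unaltered from the proof of Theorem~\ref{my_result_1}. What remains is to push the linear map through the chain rule: $\abs{\mathcal{L}}\le c$ gives $\norm{\tilde h}_1\le qc\norm{h}_1$ and $\norm{\tilde h}_2\le q^2c^2\norm{h}_2$, while $\abs{\Sigma^{-1/2}}\le c$ supplies the extra factor of $c$ carried by the prefactor. Collecting powers yields the coefficients $\frac{q^{3/2}c^2\norm{h}_1}{\sqrt\pi}$ on $K_2$ and $\frac{\sqrt{2\pi}\,q^{5/2}c^3\norm{h}_2}{8}$ on $K_3$, matching the statement.

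The work here is almost entirely bookkeeping, since the analytic content already sits in Theorem~\ref{thm:RR-improved} and in the proof of Theorem~\ref{my_result_1}. Accordingly, the main obstacle I anticipate is verifying that every estimate in the original proof that consumed a third derivative of $h$ really is confined to the $A$- and $B$-terms, so that the hypothesis genuinely weakens to two bounded derivatives and no hidden use of $\norm{h}_3$ survives; and, secondarily, tracking the half-integer powers of $q$ and the powers of $c$ consistently through both the chain-rule passage $\tilde h = h\circ\mathcal{L}$ and the $q^{1/2}\abs{\Sigma^{-1/2}}$ prefactor, in particular confirming $\abs{\mathcal{L}}\le c$ and $\abs{\Sigma^{-1/2}}\le c$ so that the extra factor of $c$ relative to Theorem~\ref{my_result_1} is legitimately absorbed.
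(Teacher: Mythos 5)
Your proposal is correct and follows exactly the paper's route: the paper's own proof is the same substitution of Theorem~\ref{thm:RR-improved} for Theorem~\ref{RR_exchangeable_pair_bound} inside the proof of Theorem~\ref{my_result_1}, together with the observation from \eqref{eq:Sigma_to_the_neg_1/2} that $\abs{\Sigma^{-1/2}} = c$, and your coefficient bookkeeping ($R\equiv 0$ so $C=0$, $\norm{\tilde h}_1\le qc\norm{h}_1$, $\norm{\tilde h}_2\le q^2c^2\norm{h}_2$, prefactor $q^{1/2}c$) reproduces the stated constants exactly. The only difference is that you spell out the details the paper omits.
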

\begin{proof}  The proof of Theorem~\ref{my_result_1} (see Appendix~\ref{sec:mle_non_asymptotic_proof}) only needs to be augmented by using Theorem~\ref{thm:RR-improved} in place of Theorem~\ref{RR_exchangeable_pair_bound} and noting that by \eqref{eq:Sigma_to_the_neg_1/2}, $\abs{\Sigma^{-1/2}} = c$.  With this adjustment, the rest of the proof is similar and the details are thus omitted.
\end{proof}

\cite{fang_wasserstein_multivariate_near_optimal} use Malliavin calculus techniques along with an exchangeable pair approach to attain a near-optimal error bound on the Wasserstein distance for multivariate approximations. Using this, a further improvement of Theorem~\ref{thm:my-result-1-improved} is possible which removes the requirement that $h$ have bounded second order derivatives at the expense of sacrificing optimal order convergence.

\section{Application to Observations from an Exponential Family}\label{sec:exp-fam}
In this section we specialize Theorem~\ref{my_result_1} to the case where observations are i.i.d.\ from an exponential family. We then show that this bound is of optimal order $\mathcal{O}(n^{-1/2})$ and calculate the bound explicitly in the case of lifetime data from an exponential distribution.

\subsection{Notation and Setup}
We slightly modify some of our notation to be more in line with the exponential family literature. We say that the distribution of $Y$ is a canonical multi-parameter exponential family distribution if for $\bm{\eta}\in \R^d$ the density of $Y$ is,
\begin{equation}\label{eq:exponential-family}
	f(y; \bm{\eta}) = \exp \left \{ \sum_{j=1}^{d}\eta_j T_j(y) - A(\bm{\eta}) + S(y) \right\} \mathbbm{1}_{\{y \in B\}}
\end{equation}
where $B$ is the support of $y$ that does not depend on $\bm{\eta}$, and $T(y) = [T_1(y), \dots, T_d(y)]^\intercal$ is the natural sufficient statistic. Here, the natural parameter $\bm{\eta}$ plays the role of the parameter of interest~$\theta$ above. We will use $\eta$ and $\bm{\eta}$ interchangeably as long as the multidimensionality of $\eta$ is clear from the context.  The cumulant function~$A(\eta)$ satisfies
\begin{equation}\label{eq:exponential-family-expectation-of-suff-statistic}
	\frac{\partial}{\partial \eta_i}A(\eta) = \Esub{\eta}{T_i(Y)}, \quad \Varsub{\eta}{T(Y)} = H(A(\eta)) = \nabla_\eta^2 A(\eta).
\end{equation}
To ease the notational burden we use the following simplified notation for the mixed moments of the sufficient statistics:
\begin{gather*}
\mu_i = \mu_i(\eta) = \Esub{\eta}{T_i(Y)} \\
\mu_{ij} = \mu_{ij}(\eta) = \Esub{\eta}{T_i(Y)T_j(Y)}\\
\mu_{ijk} = \mu_{ijk}(\eta) = \Esub{\eta}{T_i(Y)T_j(Y) T_k(Y)}.
\end{gather*}
In Section~\ref{sec:mle-exponential-fam-bound} we will also make use of the third partial derivative of $A(\eta)$ which appear in Theorem~\ref{my_result_1}. We have
\begin{multline}
\label{eq:cumulant-3rd-derivative}
	\frac{\partial^3}{\partial \eta_i \partial \eta_j \partial \eta_k} A(\eta) = \int \exp \{S(y)\} T_i(y) \bigg [ \\ 
	\left(T_j(y) - \frac{\partial}{\partial \eta_j} A(\eta) \right) \left(T_k(y) - \frac{\partial}{\partial \eta_k} A(\eta) \right) a(y, \eta)  - a(y, \eta) \frac{\partial^2}{\partial \eta_j \partial \eta_k} A(\eta) \bigg]dy  \\
	= \mu_{ijk} - \mu_{ij}\mu_{k} - \mu_{ik}\mu_j  + \mu_i\mu_j \mu_{k} - \Cov{T_j(Y)}{T_k(Y)} \mu_i \\
	= \mu_{ijk} - \mu_{ij}\mu_{k} - \mu_{ik}\mu_j -\mu_{jk}\mu_i + 2\mu_i\mu_j \mu_{k}.
\end{multline}
Define
\begin{equation}\label{eq:exp-fam-mu-epsilon}
	\mu_{iul}^\epsilon = \mu_{iul}^\epsilon(\eta_0) = \max_{\{\eta: \abs{\eta_j - \eta_{0,j}} < \epsilon, \; \forall j \in \{1, \dots, d\}\}} \abs{\mu_{ilk} - \mu_{il}\mu_{k} - \mu_{ik}\mu_l -\mu_{lk}\mu_i + 2\mu_i\mu_l \mu_{k}}
\end{equation}

The joint cumulant generating function of $W=(T_1(Y), \dots, T_d(Y))^\intercal$ is then
\begin{equation*}
	K(s) = \log \E{e^{s^\intercal W}}= A(s+\eta) -A(\eta).
\end{equation*}
Via this expression, the cumulants can be found by taking derivatives of $A(\eta)$, as follows:
\begin{equation*}
	\nabla_{s^n} K(s) \big \rvert_{s=0} = \nabla_{s^n} A(s+ \eta) \big \rvert_{s=0} = \nabla_{\eta^n} A(\eta).
\end{equation*}
Thus partial derivatives of $A(\eta)$ yield joint cummulants of $[T_1(Y), \dots, T_d(Y)]$ which in turn are polynomial functions of the moments of $[T_1(Y), \dots, T_d(Y)]$.

We now turn to MLEs of exponential families, which we denote by $\hat{\eta}$. The score function is
$$\nabla_\eta \ell(y^n; \eta) = \nabla_\eta \sum_{i =1}^n \left ( \sum_{j = 1}^d \eta_j T_j(y_i) - A(\eta) + S(y_i) \right) 
	= \sum_{i =1}^n T(y_i) - n\nabla_\eta A(\eta)$$
amd setting this equal to zero and using \eqref{eq:exponential-family-expectation-of-suff-statistic} yields
\begin{equation*}
	\frac{1}{n}\sum_{i=1}^n T(y_i) = \nabla_\eta A(\eta) \big \rvert_{\eta = \hat{\eta}} = \Esub{\hat{\eta}}{T(Y)}.
\end{equation*}
Since the log-likelihood of an exponential family is strictly concave, if the MLE exists it is a global maximum. The mean function is
\begin{equation*}
	\tau(\eta) = [\tau_1(\eta), \dots, \tau_d(\eta)]^\intercal = [\Esub{\eta}{T_1(Y)}, \dots, \Esub{\eta}{T_d(Y)}]^\intercal,
\end{equation*}
we write compactly as
\begin{equation}\label{eq:exponential-fam-mle-special-form}
	\tau(\hat{\eta}) = \frac{1}{n}\sum_{i=1}^{n}T(y_i)\qmq{or, equivalently,}\hat{\eta} = \tau^{-1} \left(\frac{1}{n}\sum_{i=1}^{n}T(y_i) \right ).
\end{equation}

We note that \eqref{eq:exponential-fam-mle-special-form} matches the form~\eqref{eq:mle-special-form} and thus the bounds found in \citet{delta} and \citet{mvn_delta} can be used on the distance to normality for MLEs of exponential families. This fact was noted in \cite{delta} for single parameter exponential families but the generalization to multi-parameter exponential families was not explicitly addressed in \cite{mvn_delta}.


\subsection{Bound to Normality for MLEs of Exponential Families}\label{sec:mle-exponential-fam-bound}

With the notation in place we are ready to state the specialization of Theorem~\ref{my_result_1} to exponential families in Corollary~\ref{cor:exp-fam}. The bound~\eqref{ex:exp-fam-statement} that results can be written completely in terms of mixed moments up to the third order of the sufficient statistics $T_1(Y), \dots, T_d(Y)$, the true value of the natural parameter $\eta_0$, and the sample. Additional knowledge of the distribution, in particular the cumulant function $A(\eta)$ and $S(Y)$, is not required. Although the bound in Corollary~\ref{cor:exp-fam} inherits the optimal order of $n^{-1/2}$ from Theorem~\ref{my_result_1}, a direct analysis of the bound's order is more straightforward due to the observations being i.i.d., and we carry this out following the corollary in Section~\ref{sec:exp.fam.order}.

\begin{corollary}\label{cor:exp-fam}
	Let $Y_1, Y_2, \dots, Y_n$ be i.i.d with probability density (or mass) function $f(y | \eta)$, for which the parameter space is an open subset of $\R^d$. Suppose $f$ is an exponential family defined by \eqref{eq:exponential-family} and the observations $Y_1, Y_2, \dots, Y_{n_k}$ are available at analysis $k = 1, \dots , K$.  Assume that the \textit{MLE} $\hat{\eta}_k$ exists and that conditions \ref{reg con 1}-\ref{reg con 6} are satisfied at each analysis $k$. Let $\{Y'_i, i = 1, 2, \dots, n \}$ be an independent copy of $\{Y_i, i = 1, 2, \dots, n \}$. For $Z \sim N_q(0_q, 1_{q \times q}), h \in \mathcal{H}$, where $\mathcal{H}$ is as in \eqref{H} and $\mu^{\epsilon}_{iul}$ defined by \eqref{eq:exp-fam-mu-epsilon}, it holds that
	\begin{multline}\label{ex:exp-fam-statement}
	\abs{\mathbb{E} [h(\sqrt{n} J^{-\frac{1}{2}}_n (\hat{\eta}^K - \eta_0^K))] - \mathbb{E} [ h(Z)]} \leq \frac{\norm{h}_1}{\sqrt{n}} K_1(\eta_0) \\ + \frac{ q^2 c^2 \norm{h}_2}{4} K_2(\eta_0) + \frac{  q^3 c^3 \norm{h}_3}{12} K_3(\eta_0) + \frac{2\norm{h}}{\epsilon^2} \mathbb{E} \big [\sum_{j = 1}^{q}Q_{j}^2 \big],
	\end{multline}
	where,
	\begin{align*}
	&K_1(\eta_0) = \frac{1}{2} \sum_{k_1 = 1}^{K} \sum_{k_2 = k_1}^{\min\{k_1 + 1, K\}} n_{k_1} \left (\frac{\abs{G_{k_2}}}{n} \right)^{-\frac{1}{2}}\sum_{l = 1}^{d} \sum_{j = 1}^{d} \mathrm{Var}^{-\frac{1}{2}}\left[{T(Y)}\right]_{lj} \Bigg \{ \\
	& \sum_{i = 1}^{d}\sum_{u = 1}^{d} \mu^{\epsilon}_{iul} \Bigg (\mathbb{E} \Bigg[\left (\tau^{-1}\left (\frac{1}{n_{k_1}}\sum_{s=1}^{n_{k_1}}T(y_s) \right )_i - \eta_{0,i}\right )^2 \times \\ &\left (\tau^{-1}\left (\frac{1}{n_{k_1}}\sum_{s=1}^{n_{k_1}}T(y_s) \right )_u - \eta_{0,u}\right )^2 \Bigg] \Bigg )^{\frac{1}{2}} \Bigg\} \\
	&K_2(\eta) =\frac{1}{\sqrt{n}}\sum_{k = 1}^{K} \left(\frac{\abs{G_k}}{n}\right)^{\frac{1}{2}}\Big\{ \sum_{j = 1}^d \Big [ \Var{(T_j(Y) - \mu_j(\eta_0))^2} \Big ]^{\frac{1}{2}} + \\
	&2 \sum_{i < j} \Big [ \Var{(T_i(Y) - \mu_i(\eta_0))(T_j(Y) - \mu_j(\eta_0))} \Big ]^ {\frac{1}{2}} \Big \} \\
	&K_3(\eta_0) = \frac{1}{\sqrt{n}} \mathbb{E} \Big [ \sum_{j = 1}^d \abs{T_j(Y') -T_j(Y)}  \Big ]^3\\
	&c = \abs{\mathrm{Var}^{-\frac{1}{2}}\left[{T(Y)}\right]}\max_{k \in \{1, \dots, K\}} \left (\frac{\abs{G_{k}}}{n} \right)^{-\frac{1}{2}}.
	\end{align*}
\end{corollary}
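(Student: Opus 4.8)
The plan is to start from the bound \eqref{my_result_1_statement} of Theorem~\ref{my_result_1} and merely specialize its four data-dependent quantities $K_1(\eta_0)$, $K_2(\eta_0)$, $K_3(\eta_0)$ and $c$ to the canonical exponential family \eqref{eq:exponential-family}; the coefficients $q^2c^2/4$ and $q^3c^3/12$ and the trailing term $\frac{2\norm{h}}{\epsilon^2}\E{\sum_{j=1}^{q}Q_j^2}$ are structural and carry over verbatim. Everything rests on two elementary facts. By \eqref{eq:exponential-family-expectation-of-suff-statistic} the score at the truth is $S_i(Y_i;\eta_0)=T(Y_i)-\nabla_\eta A(\eta_0)=T(Y_i)-\mu(\eta_0)$, and the per-observation information $I_i(\eta)=\mathrm{Var}_{\eta}[T(Y)]=\nabla_\eta^2 A(\eta)$ is deterministic and, under the i.i.d.\ assumption, identical across $i$. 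Consequently $\bar{I}_n(G_k;\eta_0)=(\abs{G_k}/n)\,\mathrm{Var}_{\eta_0}[T(Y)]$, whence $\bar{I}_n^{-1/2}(G_k;\eta_0)=(\abs{G_k}/n)^{-1/2}\mathrm{Var}_{\eta_0}^{-1/2}[T(Y)]$ and the stated form of $c$ follows at once by taking $\abs{\cdot}$ and maximizing over $k$.

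I would first dispose of $K_3$ and $K_2$, which are routine substitutions. Writing $\xi_{ij}=n^{-1/2}(T_j(Y_i)-\mu_j(\eta_0))$, the centering $\mu_j(\eta_0)$ cancels in $\xi'_{ij}-\xi_{ij}=n^{-1/2}(T_j(Y'_i)-T_j(Y_i))$, so each of the $n$ identical summands of $K_3$ equals $n^{-3/2}\E{\sum_{j=1}^d\abs{T_j(Y')-T_j(Y)}}^3$ and the outer sum contributes the overall factor $n\cdot n^{-3/2}=n^{-1/2}$. For $K_2$ the same substitution pulls $n^{-2}$ out of every variance, while the i.i.d.\ assumption collapses each $\sum_{i\in G_k}$ into $\abs{G_k}$ copies of a single-observation variance, so that $\big[\sum_{i\in G_k}\mathrm{Var}[\xi_{ij}^2]\big]^{1/2}=\abs{G_k}^{1/2}n^{-1}\mathrm{Var}^{1/2}\big[(T_j(Y)-\mu_j(\eta_0))^2\big]$ and analogously for the cross products; factoring the common $\abs{G_k}^{1/2}n^{-1}=n^{-1/2}(\abs{G_k}/n)^{1/2}$ reproduces the displayed $K_2(\eta)$.

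The substance of the proof lies in $K_1(\eta_0)$, and the decisive observation is that its first, Hessian-fluctuation summand vanishes identically. For an exponential family the log-likelihood Hessian is non-random: $\frac{\partial^2}{\partial\eta_i\partial\eta_l}\ell_{k_1}(\eta_0,Y)=-n_{k_1}[\mathrm{Var}_{\eta_0}T(Y)]_{il}$, whereas $n\bar{I}_n(k_1,\eta_0)_{li}=n_{k_1}[\mathrm{Var}_{\eta_0}T(Y)]_{li}$, so by symmetry of the covariance matrix their sum is the zero random variable and the factor $\E{\big(\frac{\partial^2}{\partial\eta_i\partial\eta_l}\ell_{k_1}(\eta_0,Y)+n\bar{I}_n(k_1,\eta_0)_{li}\big)^2}$ is $0$; this is also immediate from \eqref{big_O2}, each per-observation variance appearing there being the variance of a constant. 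Only the second summand survives, and for it I would use that the third log-likelihood derivative is likewise non-random, $\frac{\partial^3}{\partial\eta_i\partial\eta_u\partial\eta_l}\ell_{k_1}(\eta,y)=-n_{k_1}\frac{\partial^3}{\partial\eta_i\partial\eta_u\partial\eta_l}A(\eta)$, equal to $-n_{k_1}$ times the third joint cumulant of $T$ computed in \eqref{eq:cumulant-3rd-derivative}. Hence the dominating function of Theorem~\ref{my_result_1} may be taken to be the constant $M^{k_1}_{iul}=n_{k_1}\mu^{\epsilon}_{iul}$, with $\mu^{\epsilon}_{iul}$ the maximal absolute cumulant over the $\epsilon$-box in \eqref{eq:exp-fam-mu-epsilon}, so that its conditional $L^2$-norm is simply $n_{k_1}\mu^{\epsilon}_{iul}$. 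Substituting the explicit MLE $\hat{\eta}_{k_1}=\tau^{-1}(n_{k_1}^{-1}\sum_{s=1}^{n_{k_1}}T(y_s))$ from \eqref{eq:exponential-fam-mle-special-form} into the remaining fourth-order moment factor, and collecting the scalar prefactor $\tfrac12\,n_{k_1}(\abs{G_{k_2}}/n)^{-1/2}$ out front of each $(k_1,k_2)$-block (the matrix entry $\mathrm{Var}_{\eta_0}^{-1/2}[T(Y)]_{lj}$ remaining under the $l,j$ sums), reproduces the displayed $K_1(\eta_0)$.

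The one point demanding care—and where I expect the real work to be—is confirming that the constant $n_{k_1}\mu^{\epsilon}_{iul}$ is an admissible choice of $M^{k_1}_{iul}$ in the sense required by Theorem~\ref{my_result_1}: it must dominate $\abs{\frac{\partial^3}{\partial\eta_i\partial\eta_u\partial\eta_l}\ell_{k_1}(\eta,y)}$ uniformly over $\{\abs{\eta_j-\eta_{0,j}}<\epsilon\}$ and possess a finite conditional second moment. Both are inherited from the fact that this derivative is free of $y$ and continuous in $\eta$, so the maximum in \eqref{eq:exp-fam-mu-epsilon} is attained and finite under the regularity conditions; the only genuinely delicate bookkeeping is to track that the third cumulant is symmetric in $(i,u,l)$—which is what lets the index triple be permuted freely when matching terms—and to note that conditioning on $\{\abs{Q_{(m)}}<\epsilon\}$ is harmless precisely because $M^{k_1}_{iul}$ is constant. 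Once these verifications are in place, the remainder is the mechanical substitution described above.
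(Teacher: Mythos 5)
Your proposal is correct and follows essentially the same route as the paper's own proof: both specialize Theorem~\ref{my_result_1} by observing that for a canonical exponential family the log-likelihood Hessian and third derivatives are non-random, so the Hessian-fluctuation summand of $K_1$ vanishes identically and $M^{k_1}_{iul}$ may be taken to be the constant $n_{k_1}\mu^{\epsilon}_{iul}$, while the i.i.d.\ structure collapses $\bar{I}_n(G_k;\eta_0)$ to $(\abs{G_k}/n)\Varsub{\eta_0}{T(Y)}$ and reduces $K_2$, $K_3$, and $c$ by direct substitution of the centered sufficient statistics. No gaps.
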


\subsection{The bound in Corollary~\ref{cor:exp-fam} is of order $\mathcal{O}(n^{-1/2})$}\label{sec:exp.fam.order}
In Section~\ref{sec:main-result} we analyzed the order of the bound in Theorem~\ref{my_result_1} and found it is of order $n^{-1/2}$ given some mild assumptions. That it is still $\mathcal{O}(n^{-1/2})$ when specialized to the exponential family case is no surprise, and can be seen more directly here.  Letting $\abs{G_k}/n \To \gamma_k\in(0,1)$ as in Theorem~\ref{thm:MLE sequential asymtotic normality theorem}, the term~$c$ in the bound satisfies
 \begin{equation*}
	c\To \abs{\mathrm{Var}^{-\frac{1}{2}}\left[{T(Y)}\right]}\max_{k \in \{1, \dots, K\}} \gamma_k^{-\frac{1}{2}}.
\end{equation*}
and thus $c=\mathcal{O}(1)$. For the same reason, $K_2(\eta)=\mathcal{O}(n^{-1/2})$. The function $K_3(\eta)$ is exactly equal to a constant times $n^{-1/2}$ and so is $\mathcal{O}(n^{-1/2})$. The last term in the bound,
\begin{equation*}
	\frac{2\norm{h}}{\epsilon^2}\E{\sum_{j = 1}^{q}Q_j^2}
\end{equation*}
depends on the order of $$\E{Q_j^2} = \E{(\hat{\eta}_j - \eta_{0,j})^2} = \Var{\hat{\eta}_j} + (\E{Q_j})^2 = \mathcal{O}\left(n^{-1}\right).$$ This leaves only $K_1(\eta_0)$ to be considered. For the entire bound to be $\mathcal{O}(n^{-1/2})$, $K_1(\eta)$ should be $\mathcal{O}(1)$ since $K_1(\eta)$ is multiplied by $n^{-1/2}$. This is the case if and only if
\begin{equation*}
	\Bigg (\mathbb{E} \Bigg[\left (\tau^{-1}\left (\frac{1}{n_{k_1}}\sum_{s=1}^{n_{k_1}}T(y_s) \right )_i - \eta_{0,i}\right )^2 \left (\tau^{-1}\left (\frac{1}{n_{k_1}}\sum_{s=1}^{n_{k_1}}T(y_s) \right )_u - \eta_{0,u}\right )^2 \Bigg] \Bigg )^{\frac{1}{2}}
\end{equation*}
is $\mathcal{O}(n^{-1})$. By the Cauchy-Schwartz inequality, the above is bounded by
\begin{equation*}
	(\E{(\hat{\eta}_i - \eta_{0,i})^4}\E{(\hat{\eta}_u - \eta_{0,u})^4})^\frac{1}{4}.
\end{equation*}
Thus the desired order follows from  the fact that the fourth central moment of a component of the MLE is $\mathcal{O}\left(n^{-2}\right)$; see \citet[equation $2$]{jose_jose_2001}  for the univariate exponential case and \cite{peers_iqbal1985}  for the general multivariate case.

The choice of $\epsilon$ in the bound should be handled on a case-by-case basis and can be chosen to minimize the sum of the $K_1$ term and the final $\frac{2\norm{h}}{\epsilon^2}$ term. Notice that smaller $\epsilon$ increases the final term while decreases $\mu_{iul}^\epsilon$ in $K_2$.

\subsection{Example: The Exponential Distribution}
In this section we apply Corollary~\ref{cor:exp-fam} to the case where observations are i.i.d.\ from an exponential distribution. We use the natural parameterization 
\begin{equation*}
	T(y) = -y, \quad A(\eta) = - \log(\eta), \quad S(y) = 0, \quad B= [0, \infty).
\end{equation*}
We record the application of Corollary~\ref{cor:exp-fam} to this distribution in the following corollary, in which we utilize that $\hat{\eta}_{n_k} \sim \text{Inv-Gamma}(n_k, n_k\lambda)$ and $\mu^\epsilon_{iul} = 2/(\eta_0-\epsilon)^3$. The remaining  calculations as they are similar to, and simpler than, those for Corollary~\ref{cor:exp-fam} ans we thus omit them.

\begin{corollary}
	Let $Y_1, Y_2, \dots, Y_n$ be independent such that $Y_i \sim \mathrm{Exp}(\eta_0)$ with $\E{Y_i} = \frac{1}{\eta_0}$. Suppose the observations $Y_1, Y_2, \dots, Y_{n_k}$ are available at analysis $k= 1, \dots, K$. For $Z \sim N_q(0_q, 1_{q \times q}), h \in \mathcal{H}$, where $\mathcal{H}$ is as in \eqref{H}, it holds that
	\begin{multline*}
	\abs{\mathbb{E} [h(\sqrt{n} J^{-\frac{1}{2}}_n (\hat{\eta}^K - \eta_0^K))] - \mathbb{E} [ h(Z)]} \leq \frac{\norm{h}_1}{\sqrt{n}} K_1(\eta_0) + \frac{2K^2 c^2 \norm{h}_2}{\eta_0^2\sqrt{n}} K_2(\eta_0) \\ + \frac{2\eta_0^2\norm{h}}{\epsilon^2} \sum_{k = 1}^{K} \left(\frac{1}{n_k}\right)\frac{n_k^2+2n_k}{(n_k-1)(n_k-2)},
	\end{multline*}
	where,
	\begin{align*}
	&K_1(\eta_0) = \frac{\eta_0^3\sqrt{3}}{(\eta_0-\epsilon)^3} \sum_{k_1 = 1}^{K} \sum_{k_2 = k_1}^{\min\{k_1 + 1, K\}} \left (\frac{\abs{G_{k_2}}}{n} \right)^{-\frac{1}{2}} \sqrt{\frac{n_{k_1}^4+(\frac{46}{3})n_{k_1}^3+8n_{k_1}^2}{(n_{k_1}-1)\cdots(n_{k_1}-4)}} \\
	&K_2(\eta) =\sum_{k = 1}^{K} \left(\frac{\abs{G_k}}{n}\right)^{\frac{1}{2}} \\
	&c = \max_{k \in \{1, \dots, K\}} \left (\frac{\abs{G_{k}}}{n} \right)^{-\frac{1}{2}}.
	\end{align*}
\end{corollary}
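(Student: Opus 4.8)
The plan is to treat this corollary as a direct specialization of Corollary~\ref{cor:exp-fam} to the one–parameter exponential family, so that the whole argument reduces to substituting explicit exponential and Inverse-Gamma moments into the already-proved general bound. Here $d=1$, and hence $q=dK=K$, so every multi-index sum over $i,u,l,j\in\{1,\dots,d\}$ appearing in $K_1,K_2,K_3$ collapses to a single term. Working with the natural parameterization $T(y)=-y$, $A(\eta)=-\log\eta$, $S\equiv 0$, $B=[0,\infty)$, I would first record the scalar inputs the general bound requires: the variance $\Var{T(Y)}=\Var{Y}=1/\eta_0^2$ (which fixes $\mathrm{Var}^{-1/2}[T(Y)]=\eta_0$ and hence $c$), the truncated third-cumulant constant $\mu^\epsilon_{111}$ from \eqref{eq:exp-fam-mu-epsilon}, and the exact law of the MLE.

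For the law of the MLE, solving $\tau(\hat\eta_{n_k})=\frac{1}{n_k}\sum_{s\le n_k}T(Y_s)$ with $\tau(\eta)=\E{-Y}=-1/\eta$ gives $\hat\eta_{n_k}=1/\bar Y_{n_k}$; since $\sum_{s\le n_k}Y_s\sim\text{Gamma}(n_k,\eta_0)$, the MLE is $\hat\eta_{n_k}\sim\text{Inv-Gamma}(n_k,n_k\eta_0)$, whose raw moments $\E{\hat\eta_{n_k}^m}=(n_k\eta_0)^m/\prod_{r=1}^m(n_k-r)$ are available in closed form. The constant $\mu^\epsilon_{111}$ is the maximum over $\abs{\eta-\eta_0}<\epsilon$ of the absolute third central moment of $T(Y)=-Y$, namely $2/\eta^3$, which is largest at $\eta=\eta_0-\epsilon$ and so equals $2/(\eta_0-\epsilon)^3$.

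With these in hand the remaining steps are bookkeeping. In $K_1$ the indices $i,u$ coincide, so the only nontrivial factor is $(\E{(\hat\eta_{n_{k_1}}-\eta_0)^4})^{1/2}$; expanding the fourth central moment as an alternating combination of the Inverse-Gamma raw moments above, the leading $n^4$ and $n^3$ contributions cancel and one is left with $\eta_0^4(3n_{k_1}^2+46n_{k_1}+24)/[(n_{k_1}-1)\cdots(n_{k_1}-4)]$, which combined with $\mu^\epsilon_{111}$ and $\mathrm{Var}^{-1/2}[T(Y)]=\eta_0$ yields the stated $K_1(\eta_0)$. For $K_2$ I would compute $\Var{(T(Y)-\mu_1(\eta_0))^2}=\Var{(Y-1/\eta_0)^2}=8/\eta_0^4$ from the exponential central moments $\E{(Y-1/\eta_0)^2}=1/\eta_0^2$ and $\E{(Y-1/\eta_0)^4}=9/\eta_0^4$, after which pulling the resulting constants into the prefactor leaves $K_2=\sum_k(\abs{G_k}/n)^{1/2}$. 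The third-order term is evaluated analogously, using that $T(Y')-T(Y)=Y-Y'$ is Laplace with scale $1/\eta_0$, so $\E{\abs{Y-Y'}^3}=6/\eta_0^3$. Finally the truncation term reduces, via $\sum_{j=1}^{q}Q_j^2=\sum_{k=1}^{K}(\hat\eta_{n_k}-\eta_0)^2$ and $\E{(\hat\eta_{n_k}-\eta_0)^2}=\eta_0^2(n_k+2)/[(n_k-1)(n_k-2)]$, to the explicit sum displayed in the statement; substituting all of these into Corollary~\ref{cor:exp-fam} and simplifying gives the bound.

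I expect the main obstacle to be purely computational: verifying the cancellation of the top two powers of $n_{k_1}$ in the fourth-central-moment expansion for $K_1$ and then correctly collecting the polynomial $3n^2+46n+24$, while simultaneously tracking how the powers of $\eta_0$ coming from $\mathrm{Var}^{-1/2}[T(Y)]$, from $c$, and from the moment factors migrate between the $K_i$ and the constant prefactors. No new analytic idea beyond Corollary~\ref{cor:exp-fam} is needed; the exponential law is chosen precisely so that every moment required is available in closed form through the Inverse-Gamma distribution of the MLE.
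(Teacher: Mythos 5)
Your proposal is correct and takes essentially the same route as the paper, whose own proof consists of precisely this specialization of Corollary~\ref{cor:exp-fam} using $\hat{\eta}_{n_k} \sim \mathrm{Inv\text{-}Gamma}(n_k, n_k\eta_0)$ and $\mu^\epsilon_{111} = 2/(\eta_0-\epsilon)^3$ with all remaining calculations omitted; your explicit computations check out, since the fourth central moment $\eta_0^4(3n_{k_1}^2+46n_{k_1}+24)/[(n_{k_1}-1)\cdots(n_{k_1}-4)]$ together with $\mathrm{Var}^{-\frac{1}{2}}[T(Y)]=\eta_0$ reproduces the stated $K_1(\eta_0)$ exactly, and $\E{(\hat{\eta}_{n_k}-\eta_0)^2}=\eta_0^2(n_k+2)/[(n_k-1)(n_k-2)]$ reproduces the truncation term exactly. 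The one caveat—inherited from the paper's printed statement rather than from your method—is that faithful substitution into Corollary~\ref{cor:exp-fam} also yields the third-derivative term $\frac{K^3c^3\norm{h}_3}{2\eta_0^3\sqrt{n}}$ (via $\E{\abs{Y-Y'}^3}=6/\eta_0^3$) and a $K_2$ coefficient of $1/\sqrt{2}$ rather than $2$, whereas the corollary as stated drops the $\norm{h}_3$ term entirely, so "simplifying gives the bound" holds only up to this discrepancy in the paper itself.
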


\section{A Multivariate Kolmogorov/Smooth Function Bound}\label{sec:mv.K.dist}
In this section we give a general upper bound on the multivariate \hbox{Kolmogorov} distance 
\begin{equation}\label{kol.d.def}
d_{kol}(W,Z) = \sup_{\bm{x}\in \mathbb{R}^p} \left|\mathbbm{1}\{W\le\bm{x}\}- \mathbbm{1}\{Z\le\bm{x}\}\right|
\end{equation}
of two $p$-dimensional random vectors $W,Z$, in terms of the smooth function class distances, such as those in our results in Sections~\ref{sec:main-result} and \ref{sec:exp-fam}.  In \eqref{kol.d.def}, $\mathbbm{1}\{W\le\bm{x}\}$ is the indicator function of the $p$-dimensional ``lower quadrant'' $\{W\le\bm{x}\}=\{W_1\le x_1,\ldots, W_p\le x_p\}$.  These bounds may be of independent interest, but another reason we include them here is because they open the door to applying our smooth function bounds for the group sequential MLEs to the normal, to Kolmogorov distance, which may be desirable in statistical applications.

First we modify our notation slightly to show more explicitly the dependence on dimension and derivative order. For
$\bm{k} = (k_1, \dots, k_p) \in \mathbb{N}_0^p$, let $\abs{\bm{k}} = \sum_{i = 1}^{p} k_i$,
and for functions $h:\R^p \rightarrow \R$ whose partial derivatives
\begin{equation*}
	h^{\bm{k}}(\bm{x}) = \frac{\partial^{k_1+\cdots+k_p}h}{\partial^{k_1}x_1\dots \partial^{k_p}x_p} \quad \text{exist for all} \quad 0 \leq \abs{\bm{k}} \leq m,
\end{equation*}
and $\norm{\cdot}$ the supremum norm, let $L_m^\infty(\R^p)$ be the collection of all functions $h:\R^p \rightarrow \R$ with
\begin{equation*}
	\norm{h}_{L_m^\infty(\R^p)} = \max_{0 \leq \abs{\bm{k}} \leq m} \norm{h^{(\bm{k})}}
\end{equation*}
finite. For random vectors $X$ and $Y$ in $\R^p$, letting
\begin{equation*}
	\mathcal{H}_{m, \infty, p} = \{h \in L_m^\infty(\R^p): \norm{h}_{L_m^\infty(\R^p)} \leq 1\},
\end{equation*}
define
\begin{equation*}
	d_{m, \infty, p}(X,Y) = \norm{\mathcal{L}(X)- \mathcal{L}(Y)}_{\mathcal{H}_{m, \infty, p}} = \sup_{h \in \mathcal{H}_{m, \infty, p}} \abs{\E{h(X)} - \E{h(Y)}}.
\end{equation*}
Connecting to the previous sections, note that, for example,  Theorem~\ref{my_result_1} gives a bound on $d_{3,\infty,q}$ between the standardized group sequential MLE and the $q$-dimensional standard normal, and Theorem~\ref{thm:my-result-1-improved} a bound on $d_{2,\infty,q}$.

We first state our general result in Theorem~\ref{thm:kol-smooth-fn-bound}, and then specialize to the $m=3$ case in Corollary~\ref{cor:kol.m3}.

\begin{theorem}\label{thm:kol-smooth-fn-bound}
	Suppose $W$ and $Z$ are $p$-dimensional, $p\ge 2$, random vectors such that $Z$ has a density bounded by a constant $C_1$. Then there exists a constant $C_2 \geq 1$ that depends only on $m$, $m \geq 1$, such that 
	\begin{equation*}
		d_{\text{kol}}(W, Z) \leq d_{m, \infty, p}(W, Z)^{\frac{p-1}{m+p-1}}\left(C_2^p +p + C_1 d_{m, \infty, p}(W, Z)^{\frac{1}{m+p-1}}\right).
	\end{equation*}
\end{theorem}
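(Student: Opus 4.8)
The plan is to bound the Kolmogorov distance by the smooth-function distance via a standard \emph{smoothing} argument: replace the discontinuous indicator $\mathbbm{1}\{\cdot \le \bm{x}\}$ by a smooth approximant, control the approximation error in two pieces, and then optimize over the smoothing scale. Fix $\bm{x} \in \R^p$ and consider the indicator $g_{\bm{x}}(\bm{w}) = \mathbbm{1}\{\bm{w} \le \bm{x}\}$. First I would construct, for a smoothing parameter $\delta > 0$, a function $h_{\bm{x},\delta} \in L_m^\infty(\R^p)$ that agrees with $g_{\bm{x}}$ outside a boundary strip of width $O(\delta)$ around the orthant boundary $\partial\{\bm{w}\le\bm{x}\}$, takes values in $[0,1]$, and satisfies the derivative bounds $\norm{h_{\bm{x},\delta}^{(\bm{k})}} \le C\,\delta^{-\abs{\bm{k}}}$ for all $\abs{\bm{k}}\le m$. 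Such a mollified indicator is obtained by convolving $g_{\bm{x}}$ (or a slightly shrunk/expanded orthant) with a fixed smooth bump supported in a ball of radius $\delta$; the constant $C$ in the derivative bound depends only on $m$ (and the choice of bump), which is the source of the constant $C_2$.

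The core estimate is the triangle-inequality decomposition
\begin{equation*}
\abs{\E{g_{\bm{x}}(W)} - \E{g_{\bm{x}}(Z)}} \le \abs{\E{(g_{\bm{x}}-h_{\bm{x},\delta})(W)}} + \abs{\E{h_{\bm{x},\delta}(W)} - \E{h_{\bm{x},\delta}(Z)}} + \abs{\E{(h_{\bm{x},\delta}-g_{\bm{x}})(Z)}}.
\end{equation*}
The middle term is bounded using the test-function class: since $h_{\bm{x},\delta}/\norm{h_{\bm{x},\delta}}_{L_m^\infty} \in \mathcal{H}_{m,\infty,p}$ and $\norm{h_{\bm{x},\delta}}_{L_m^\infty} \le C\,\delta^{-m}$ (the top-order derivative dominates for small $\delta$), this term is at most $C\,\delta^{-m} d_{m,\infty,p}(W,Z)$. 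The two outer terms measure the probability that $W$, respectively $Z$, lands in the boundary strip where $h_{\bm{x},\delta}$ and $g_{\bm{x}}$ differ. For $Z$, which has density bounded by $C_1$, the strip has Lebesgue measure $O(p\,\delta)$ (it is the union of $p$ slabs of width $O(\delta)$ meeting near the corner), so the $Z$-error is $\le C_1\,p\,\delta$ up to constants. The genuinely delicate point is the $W$-error: we have \emph{no} regularity on the law of $W$, so I cannot bound $\mathbb{P}(W \in \text{strip})$ directly. The standard resolution is to sandwich $g_{\bm{x}}$ between two smooth functions $h^-_{\bm{x},\delta}\le g_{\bm{x}} \le h^+_{\bm{x},\delta}$ and transfer the $W$-probability of the strip to a $Z$-probability plus a smooth-distance error: one shows $\mathbb{P}(W\in\text{strip}) \le \abs{\E{(h^+_{\bm{x},\delta}-h^-_{\bm{x},\delta})(W)}} \le \abs{\E{(h^+_{\bm{x},\delta}-h^-_{\bm{x},\delta})(Z)}} + 2C\delta^{-m}d_{m,\infty,p}(W,Z)$, and the $Z$-term is again $O(C_1 p\,\delta)$ by the density bound. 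This is why both the $W$- and $Z$-errors end up governed by $C_1 p\,\delta$ and the smooth distance.

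Collecting the pieces gives a bound of the form
\begin{equation*}
d_{\text{kol}}(W,Z) \le C_2^{\,p}\,\delta^{-m}\,d_{m,\infty,p}(W,Z) + C_1\,p\,\delta,
\end{equation*}
where I expect the dimension-dependent constant to enter as $C_2^{p}$ because the mollifier on $\R^p$ and the orthant-corner combinatorics contribute a factor that is exponential in $p$. The final step is to optimize the free parameter $\delta$. Balancing the two terms suggests $\delta \asymp \big(d_{m,\infty,p}(W,Z)\big)^{1/(m+1)}$ in the simplest reading, but to land exactly on the stated exponents $\tfrac{p-1}{m+p-1}$ and $\tfrac{1}{m+p-1}$ one must be more careful about how the strip measure scales near the corner: the relevant boundary has codimension controlled by the number of active constraints, giving an effective strip volume of order $\delta^{\,p-1}$ rather than $\delta$ in the dominant regime, so the balance becomes $\delta^{-m}d_{m,\infty,p} \asymp \delta^{\,p-1}$, yielding $\delta \asymp d_{m,\infty,p}^{1/(m+p-1)}$ and hence the overall rate $d_{m,\infty,p}^{(p-1)/(m+p-1)}$. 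Substituting this choice of $\delta$ back, bounding each constant explicitly and absorbing them into $C_2^p$, $p$, and the $C_1$-term, produces the stated inequality. The main obstacle is the $W$-error control without any smoothness of $\mathcal{L}(W)$; the sandwiching trick above, together with careful bookkeeping of the corner geometry to get the $\delta^{p-1}$ scaling and thus the precise exponents, is where the real work lies, while the mollifier construction and the final optimization are routine.
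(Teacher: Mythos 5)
Your overall strategy is the same as the paper's: replace the quadrant indicator by a smooth approximant whose derivatives up to order $m$ are $O(\delta^{-m})$, bound the resulting smooth-function discrepancy by $C_2^p\,\delta^{-m}d_{m,\infty,p}(W,Z)$, bound the probability that $Z$ lands in the boundary strip using its density, and optimize over $\delta$. The differences in execution are cosmetic: the paper builds its approximant as a product $\prod_{i=1}^p S_m(-x_i/\alpha+(\alpha+z_i)/\alpha)$ of one-dimensional ramp functions $S_m$ (which is exactly what makes the constant enter as $C_2^p=\norm{S_m}_{L_m^\infty(\R)}^p$), and it handles the lack of regularity of $W$ by one-sided domination of the indicator plus a symmetric argument, which is your sandwich in disguise.

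The genuine gap is the step you yourself flag as ``where the real work lies,'' and your proposed resolution of it is false. You claim the boundary strip has effective measure of order $\delta^{p-1}$ because of corner codimension, but the codimension heuristic runs the wrong way: a $\delta$-neighborhood of the boundary of $\{w\le\bm{x}\}$ is dominated by its codimension-one faces, i.e.\ by slabs where a single coordinate lies within $\delta$ of its threshold while the remaining coordinates sit far below theirs. Under a law with bounded one-dimensional marginals such a slab carries probability of order $\delta$; the pieces where $k$ constraints are simultaneously active carry order $\delta^{k}$, which is smaller, not larger, so the dominant contribution is $O(p\delta)$. (Separately, your claim that the strip has Lebesgue measure $O(p\delta)$ cannot be right as written: the slabs are unbounded, hence of infinite measure, and one needs bounded marginals --- not merely the bounded joint density assumed in the theorem --- to convert width into probability; the paper quietly relies on this by writing $\Phi(z_i)$, i.e.\ by taking $Z$ normal.) The defensible version of your bound is therefore $C_2^p\delta^{-m}d_{m,\infty,p}+Cp\delta$, and optimizing it gives the exponent $1/(m+1)$, which agrees with the stated theorem only when $p=2$.

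For comparison, the paper produces the stated exponents by bounding $P(Z\le\bm{z}+\bm{\alpha})-P(Z\le\bm{z})$ by a corner box of volume $\alpha^p$ plus $p$ ``legs'' of cross-section $\alpha^{p-1}$, giving $C_1\alpha^p+p\alpha^{p-1}$; that $\alpha^{p-1}$ is precisely the source of the exponent $(p-1)/(m+p-1)$. But this covering coincides with the actual difference region only when $p=2$: for $p\ge3$, a point with exactly one coordinate in its window and the others far below its thresholds lies in the difference region yet in none of the legs, and such points carry probability of order $\alpha$. So your inability to justify the $\delta^{p-1}$ scaling is not a fixable slip in your write-up; it sits exactly at the step where the paper's own geometric argument is only valid for $p=2$, and the rate your ``simplest reading'' yields, $d_{m,\infty,p}^{1/(m+1)}$, is what this method actually proves in general dimension.
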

\begin{proof}
	Assume $d_{m, \infty, p}(W, Z) \leq 1$ since otherwise the bound is trivial. Consider $S_m: \R \rightarrow \R$ such that $S_m \in  L_m^\infty(\R)$ and
	\begin{equation*}
		S_m(x) = 
		\begin{cases}
		0 & \text{if }x \leq 0 \\
		f(x) & \text{if } 0 \leq x \leq 1 \\
		1 & \text{if } 1 \leq x
		\end{cases}
	\end{equation*}
where $f$ is some function with $0 \leq f(x) \leq 1$ for all $0 \leq x \leq 1$. Let $h_{z,\alpha}(x) = S_m(-x/\alpha + (\alpha + z)/\alpha)$ and notice that since $h_{z,\alpha}(x)$ dominates $\mathbbm{1}_{\{x \leq z\}}$,
	\begin{equation}\label{eq:first-inequality}
		\begin{aligned}
		P(W \leq \bm{z}) - P(Z \leq \bm{z} ) &\leq \E{\prod_{i=1}^{p}h_{z_i,\alpha}(W_i)} - \E{\prod_{i=1}^{p}h_{z_i,\alpha}(Z_i)} \\
		&+ \E{\prod_{i=1}^{p}h_{z_i,\alpha}(Z_i)} - P(Z \leq \bm{z}).
		\end{aligned}
	\end{equation}
	We begin by bounding the first two terms on the right hand side of \eqref{eq:first-inequality}. Since $h_{z_i,\alpha} \in L_m^\infty(\R)$, the function $\bm{h}_{\bm{z}, \alpha} := \prod_{i=1}^{p}h_{z_i,\alpha}$ is in $L_m^\infty(\R^p)$. We have
\begin{multline*}
\norm{\bm{h}_{\bm{z}, \alpha}}_{L_m^\infty(\R^p)} = \max_{0 \leq \abs{\bm{k}} \leq m} \norm{\bm{h}_{\bm{z}, \alpha}^{(\bm{k})}} = \max_{0 \leq \abs{\bm{k}} \leq m} \prod_i^p \norm{h_{z_i, \alpha}^{(k_i)}} \\
		=  \max_{0 \leq \abs{\bm{k}} \leq m} \prod_i^p \left ( \frac{1}{\alpha} \right )^{k_i} \norm{S_m^{(k_i)}} 
		 \leq \max_{0 \leq \abs{\bm{k}} \leq m} \prod_i^p \left ( \frac{1}{\alpha} \right )^{k_i} \norm{S_m}_{L_m^\infty(\R)} \\
		= \left (\frac{1}{\alpha} \right )^m\norm{S_m}_{L_m^\infty(\R)}^p.
\end{multline*}
	Thus
	\begin{equation*}
		\frac{\alpha^m \bm{h}_{\bm{z}, \alpha}}{\norm{S_m}_{L_m^\infty(\R)}^p} \in \mathcal{H}_{m, \infty, p}.
	\end{equation*}
Then the first two terms of the right hand side of \eqref{eq:first-inequality} are bounded by 
	\begin{equation}\label{eq:first-2-sums}
		\E{\prod_{i=1}^{p}h_{z_i,\alpha}(W_i)} - \E{\prod_{i=1}^{p}h_{z_i,\alpha}(Z_i)} \leq \frac{C_2^p d_{m, \infty, p}(W,Z)}{\alpha^m}
	\end{equation} 
	for $C_2 = \norm{S_m}_{L_m^\infty(\R)}$. Furthermore, $1 = \norm{S_m} \leq \norm{S_m}_{L_m^\infty(\R)} = C_2$.
	
	Now we consider the last two terms of \eqref{eq:first-inequality}. For $\bm{\alpha} = (\alpha, \dots, \alpha) \in \R^p$,
	\begin{equation}\label{eq:second-2-sums}
		\E{\prod_{i=1}^{p}h_{z_i,\alpha}(Z_i)} - P(Z \leq \bm{z}) \leq P(Z \leq \bm{z} + \bm{\alpha}) - P(Z \leq \bm{z}).
	\end{equation}
The right hand side of \eqref{eq:second-2-sums} can be bounded by a $(p+1)$-dimensional rectangle of base $\alpha^p$, height $C_1$, and $p$ `legs' parallel to each axis with base $\alpha^{p-1}$ and length $\Phi(z_i)$. Thus
	\begin{align}\label{eq:second-2-sums-final}
		P(Z \leq \bm{z} + \bm{\alpha}) - P(Z \leq \bm{z}) &\leq C_1 \alpha^p + \sum_{i = 1}^p \Phi(z_i)\alpha^{p-1} \nonumber\\
		&\leq C_1 \alpha^p + p\alpha^{p-1}.
	\end{align}
	Combining \eqref{eq:first-inequality} with \eqref{eq:first-2-sums} and \eqref{eq:second-2-sums-final} gives
	\begin{equation}
		P(W \leq \bm{z}) - P(Z \leq \bm{z} ) \leq \frac{C_2^p d_{m, \infty, p}(W,Z)}{\alpha^m} + C_1 \alpha^p + p\alpha^{p-1}.
	\end{equation}
	Letting $\alpha = d_{m, \infty, p}(W,Z)^{1/(m+p-1)}$ and plugging into the above equation yields,
	\begin{equation*}
		P(W \leq \bm{z}) - P(Z \leq \bm{z} ) \leq d_{m, \infty, p}(W,Z)^\frac{p-1}{m+p-1}\left(C_2^p + p + C_1 d_{m, \infty, p}(W,Z)^\frac{1}{m+p-1}\right).
	\end{equation*}
	Similarly,
	\begin{equation*}
		P(W \leq \bm{z}) - P(Z \leq \bm{z} ) \geq -d_{m, \infty, p}(W,Z)^\frac{p-1}{m+p-1}\left(C_2^p + p + C_1 d_{m, \infty, p}(W,Z)^\frac{1}{m+p-1}\right)
	\end{equation*}
	which completes the proof.
\end{proof}

\begin{corollary}\label{cor:kol.m3}
	The Kolmogorov distance is bounded by $d_{3, \infty, p}(\cdot, \cdot)$ as follows,
	\begin{equation*}
	d_{\text{kol}}(W, Z) \leq d_{3, \infty, p}(W, Z)^{\frac{p-1}{2+p}}\left(52.5^p +p + \frac{d_{3, \infty, p}(W, Z)^{\frac{1}{2+p}}}{(2\pi)^{p/2}}\right).
	\end{equation*}
\end{corollary}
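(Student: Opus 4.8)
The plan is to derive the corollary from Theorem~\ref{thm:kol-smooth-fn-bound} by specializing to $m=3$ and to a standard normal target $Z$, which fixes the two constants $C_1$ and $C_2$ explicitly. With $m=3$ one has $m+p-1 = p+2$, so the exponents $\frac{p-1}{m+p-1}$ and $\frac{1}{m+p-1}$ appearing in the theorem become $\frac{p-1}{p+2}$ and $\frac{1}{p+2}$, exactly as in the statement; it then remains only to identify $C_1$ and $C_2$.

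The constant $C_1$ is immediate: since $Z\sim N_p(0_p,1_{p\times p})$ has density $(2\pi)^{-p/2}\exp(-\norm{\bm x}^2/2)$, maximized at the origin, we may take $C_1=(2\pi)^{-p/2}$, which is precisely the coefficient of the final term in the corollary.

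The substantive step is to exhibit an admissible smoothing function and compute $C_2 = \norm{S_3}_{L_3^\infty(\R)}$, the quantity taken as $C_2$ in the proof of Theorem~\ref{thm:kol-smooth-fn-bound}. I would take the interpolating piece on $[0,1]$ to be the degree-seven polynomial $f(x)=35x^4-84x^5+70x^6-20x^7$ and verify that it yields a genuine element of $L_3^\infty(\R)$: one checks $f(0)=0$, $f(1)=1$, and $f'(0)=f''(0)=f'''(0)=f'(1)=f''(1)=f'''(1)=0$, so that $S_3$ together with its first three derivatives is continuous across the junctions $x=0,1$ and hence $S_3\in L_3^\infty(\R)$ with $0\le S_3\le 1$.

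It then remains to evaluate $\norm{S_3}_{L_3^\infty(\R)}=\max\{\norm{S_3},\norm{S_3'},\norm{S_3''},\norm{S_3'''}\}$. Here $\norm{S_3}=1$, while $f'(x)=140\,x^3(1-x)^3$ and $f''(x)=420\,x^2(1-x)^2(1-2x)$ have maxima over $[0,1]$ that are small compared with the third derivative. Writing $f'''(x)=840\,x(x-1)(-5x^2+5x-1)$ and using the symmetry $f'''(x)=f'''(1-x)$ about $x=\frac12$, the maximum of $\abs{f'''}$ is attained at $x=\frac12$, where $\abs{f'''(\frac12)}=52.5$, the remaining interior critical points giving strictly smaller values. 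Hence $C_2=52.5\ge 1$, and substituting $C_1$ and $C_2$ into Theorem~\ref{thm:kol-smooth-fn-bound} produces the claimed bound. The main obstacle is precisely this last construction: one must choose a smoothing profile smooth enough to sit in $L_3^\infty(\R)$ --- which forces a $C^3$, hence at least degree-seven, interpolant rather than the lower-order smoothstep polynomials --- while keeping its $L_3^\infty$ norm under control, and then confirm that it is the third derivative, not $S_3$ or its lower derivatives, that determines the constant $52.5$.
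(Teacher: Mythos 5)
Your proposal is correct and follows essentially the same route as the paper: specialize Theorem~\ref{thm:kol-smooth-fn-bound} to $m=3$ with $C_1=(2\pi)^{-p/2}$ for the standard normal density, and take the degree-seven Hermite interpolant $S_3(x)=35x^4-84x^5+70x^6-20x^7$ on $[0,1]$, which is exactly the polynomial the paper uses, yielding $C_2=\norm{S_3}_{L_3^\infty(\R)}=52.5$. Your verification that the third derivative (with $\abs{f'''(1/2)}=52.5$ dominating $\norm{f}=1$, $\norm{f'}=140/64$, and $\norm{f''}\approx 7.5$) determines the constant is precisely the ``simple calculation'' the paper leaves implicit.
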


\begin{proof}
We follow the proof of Theorem~\ref{thm:kol-smooth-fn-bound}. Let $S_m$ be the Hermite interpolation of some function $f$ between $f(0) = 0$ and $f(1)=1$ with $f^{(i)}(0)=f^{(i)}(1)=0$ for all $i=1, \dots, m$. Thus $S_m \in L_m^\infty(\R)$ with
	\begin{equation*}
	S_m(x) = 
	\begin{cases}
	0 & \text{if }x \leq 0 \\
	x^{m+1}\sum_{k=0}^{m}{m+k \choose k}{2m+1 \choose m-k}(-x)^k & \text{if } 0 \leq x \leq 1 \\
	1 & \text{if } 1 \leq x
	\end{cases}.
	\end{equation*}
	In particular,
	\begin{equation*}
		S_3(x) = -20x^7+70x^6-84x^5+35x^4 \quad \text{for} \quad 0 \leq x \leq 1.
	\end{equation*}
	Simple calculations yield $\norm{S_3}_{L_3^\infty(\R)} = 52.5$ and the result follows.
\end{proof}

\section{Conclusion and Future Directions}

We have generalized the results of \cite{AR_bound} and \cite{mvn} to find optimal order bounds for the joint distribution of a sequence of maximum likelihood estimates based on accumulating data.  The approximate normality of this joint distribution is an essential assumption underlying the statistics of group sequential hypothesis testing, the dominant paradigm in clinical trials. The specialization of this bound to exponentials families in Corollary~\ref{cor:exp-fam} is the simplest such bound covering this case in the non-sequential setting, that we are aware of.

A direction in which these results may be generalized is to examine log-likelihood functions of the form
\begin{equation*}
\sum_{i \in G_k}\log f_i(Y_i, \theta) + g_k(\mathcal{Y}_k, \theta),
\end{equation*}
with $\mathcal{Y}_k = \{Y_i : i \in G_k \}$. 
This form  may provide a way of  relaxing the independence assumption between samples that could be amenable to Stein's method. It is also the form of some log-likelihood functions of generalized linear mixed models (GLMMs) with random stage effects, for example,  the GLMM with Poisson response variable and canonical log link function. 


\begin{thebibliography}{}

\bibitem[Anastasiou, 2017]{mle_bound_m_dependence}
Anastasiou, A. (2017).
\newblock Bounds for the normal approximation of the maximum likelihood
  estimator from m-dependent random variables.
\newblock {\em Statistics \& Probability Letters}, 129:171--181.

\bibitem[Anastasiou, 2018]{mvn}
Anastasiou, A. (2018).
\newblock Assessing the multivariate normal approximation of the maximum
  likelihood estimator from high-dimensional, heterogeneous data.
\newblock {\em Electronic Journal of Statistics}, 12(2):3794--3828.

\bibitem[Anastasiou and Gaunt, 2018]{mvn_delta}
Anastasiou, A. and Gaunt, R.~E. (2018+).
\newblock Multivariate normal approximation of the maximum likelihood estimator
  via the delta method.
\newblock {\em Brazilian Journal of Probability and Statistics}, page to
  appear.

\bibitem[Anastasiou and Ley, 2017]{delta}
Anastasiou, A. and Ley, C. (2017).
\newblock Bounds for the asymptotic normality of the maximum likelihood
  estimator using the delta method.
\newblock {\em Latin American Journal of Probability and Statistics},
  14:153–171.

\bibitem[Anastasiou and Reinert, 2017]{AR_bound}
Anastasiou, A. and Reinert, G. (2017).
\newblock Bounds for the normal approximation of the maximum likelihood
  estimator.
\newblock {\em Bernoulli}, 23(1):191–218.

\bibitem[Bartroff et~al., 2013]{Bartroff13}
Bartroff, J., Lai, T.~L., and Shih, M. (2013).
\newblock {\em Sequential Experimentation in Clinical Trials: Design and
  Analysis}.
\newblock Springer, New York.

\bibitem[Chen et~al., 2010]{larrybook}
Chen, L.~H., Goldstein, L., and Shao, Q.-M. (2010).
\newblock {\em Normal Approximation by {S}tein's Method}.
\newblock Springer, Berlin Heidelberg.

\bibitem[de~A.~Cysneiros et~al., 2001]{jose_jose_2001}
de~A.~Cysneiros, F.~J., dos Santos, S. J.~P., and Cordeiro, G.~M. (2001).
\newblock Skewness and kurtosis for maximum likelihood estimator in
  one-parameter exponential family models.
\newblock {\em Brazilian Journal of Probability and Statistics}, 15(1):85--105.

\bibitem[Fang et~al., 2018]{fang_wasserstein_multivariate_near_optimal}
Fang, X., Shao, Q.-M., and Xu, L. (2018).
\newblock Multivariate approximations in wasserstein distance by {S}tein's
  method and bismut's formula.
\newblock {\em Probability Theory and Related Fields}, Online First:1--35.

\bibitem[Gaunt, 2016]{gaunt_new_bounds_on_solutions_moment_conditions}
Gaunt, R.~E. (2016).
\newblock Rates of convergence in normal approximation under moment conditions
  via new bounds on solutions of the {S}tein equation.
\newblock {\em Journal of Theoretical Probability}, 26:231--247.

\bibitem[Goldstein and Reinert, 1997]{zero_bias}
Goldstein, L. and Reinert, G. (1997).
\newblock Stein's method and the zero bias transformation with application to
  simple random sampling.
\newblock {\em Ann. Appl. Probab.}, 7(4):935--952.

\bibitem[Goldstein and Rinott, 1996]{size_bias_and_multivariate_solution_bound}
Goldstein, L. and Rinott, Y. (1996).
\newblock Multivariate normal approximations by {S}tein's method and size bias
  couplings.
\newblock {\em J. Appl. Prob.}, 33(1):1--17.

\bibitem[Hoadley, 1971]{hoadley1971}
Hoadley, B. (1971).
\newblock Asymptotic properties of maximum likelihood estimators for the
  independent not identically distributed case.
\newblock {\em Ann. Math. Statist.}, 42(6):1977--1991.

\bibitem[Jennison and Turnbull, 1997]{jt}
Jennison, C. and Turnbull, B.~W. (1997).
\newblock Group sequential analysis incorporating covariate information.
\newblock {\em Journal of the American Statistical Association},
  92(440):1330--1341.

\bibitem[Jennison and Turnbull, 2000]{sequential_book}
Jennison, C. and Turnbull, B.~W. (2000).
\newblock {\em Group Sequential Methods with Applications to Clinical Trials}.
\newblock Chapman \& Hall, Boca Raton, Florida.

\bibitem[O'Brien and Fleming, 1979]{OBrien1979AMT}
O'Brien, P. and Fleming, T.~R. (1979).
\newblock A multiple testing procedure for clinical trials.
\newblock {\em Biometrics}, 35(3):549--56.

\bibitem[Peers and Iqbal, 1985]{peers_iqbal1985}
Peers, H.~W. and Iqbal, M. (1985).
\newblock Asymptotic expansions for confidence limits in the presence of
  nuisance parameters, with applications.
\newblock {\em Journal of the Royal Statistical Society. Series B
  (Methodological)}, 47(3):547--554.

\bibitem[Pinelis, 2017]{kol_mle_bound}
Pinelis, I. (2017).
\newblock Optimal-order uniform and nonuniform bounds on the rate of
  convergence to normality for maximum likelihood estimators.
\newblock {\em Electronic Journal of Statistics}, 11:1160--1179.

\bibitem[Pocock, 1977]{pocock1977group}
Pocock, S.~J. (1977).
\newblock Group sequential methods in the design and analysis of clinical
  trials.
\newblock {\em Biometrika}, 64(2):191--199.

\bibitem[Reinert and R\"{o}llin, 2009]{RR_exchangeable_pair_bound}
Reinert, G. and R\"{o}llin, A. (2009).
\newblock Multivariate normal approximation with {S}tein's method of
  exchangable pairs under a general linearity condition.
\newblock {\em The Annals of Probability}, 36(6):2150--2173.

\bibitem[Stein, 1972]{stein1972}
Stein, C. (1972).
\newblock A bound for the error in the normal approximation to the distribution
  of a sum of dependent random variables.
\newblock In {\em Proceedings of the Sixth Berkeley Symposium on Mathematical
  Statistics and Probability, Volume 2: Probability Theory}, pages 583--602,
  Berkeley, Calif. University of California Press.

\bibitem[Ulyanov, 1979]{ulyanov1979}
Ulyanov, V. (1979).
\newblock On more precise convergence rate estimates in the central limit
  theorem.
\newblock {\em Theory of Probability \& Its Applications}, 23(3):660--663.

\bibitem[Ulyanov, 1986]{ulyanov1986}
Ulyanov, V. (1986).
\newblock Normal approximation for sums of non-identically distributed random
  variables in {H}ilbert spaces.
\newblock {\em Acta Scientiarum Mathematicarum}, 50:411--419.

\bibitem[Ulyanov, 1987]{ulyanov1987}
Ulyanov, V. (1987).
\newblock Asymptotic expansions for distributions of sums of independent random
  variables in {H}.
\newblock {\em Theory of Probability \& Its Applications}, 31(1):25--39.

\bibitem[van~der Vaart, 2007]{asymptotic_book}
van~der Vaart, A.~W. (2007).
\newblock {\em Asymptotic Statistics}.
\newblock Cambrige University Press, New York, NY.

\end{thebibliography}

\section*{Appendix}
\appendix
\section{Proof of Theorem~\ref{my_result_1}}\label{sec:mle_non_asymptotic_proof}

In this appendix we provide the detailed proof of Theorem \ref{my_result_1}, followed by two auxiliary lemmas.
\begin{proof}[Proof of Theorem~\ref{my_result_1}]
	By the triangle inequality we will bound
	\begin{equation*} 
	\abs{\mathbb{E} [h(\sqrt{n} J^{-\frac{1}{2}}_n (\hat{\theta}^K - \theta_0^K))] - \mathbb{E} [ h(Z)]}
	\end{equation*}
	above by
	\begin{align} 
	&\abs {\mathbb{E} [h(\frac{1}{\sqrt{n}} \tilde{J}^{-\frac{1}{2}}_n [S(1, \theta_0), \dots, S(K, \theta_0)]^\intercal)] - \mathbb{E} [ h(Z)]} \label{main triangle inequality 1st term} \\
	+&\abs{\mathbb{E} [h(\sqrt{n} J^{-\frac{1}{2}}_n (\hat{\theta}^K - \theta_0))] - \mathbb{E} [h(\frac{1}{\sqrt{n}} \tilde{J}^{-\frac{1}{2}}_n [S(1, \theta_0), \dots, S(K, \theta_0)]^\intercal)]} \label{main triangle inequality 2nd term}.
	\end{align}
	where $S(k; \theta) = \sum_{i =1}^{n_k} S_i(Y_i ; \theta) \in \R^d$. We begin by finding an upper bound for \eqref{main triangle inequality 1st term}.
	\paragraph{Upper bound for \eqref{main triangle inequality 1st term}.}
	Let $\tilde{h}(w) = h(\tilde{J}^{-\frac{1}{2}}_n Aw)$ and define 
	\begin{gather*}
	S(G_k; \theta) = \sum_{i \in G_k} S_i(Y_i ; \theta) \in \R^d,\\
	W = n^{-\frac{1}{2}} [S(G_1, \theta_0)^\intercal \dots, S(G_K, \theta_0)^\intercal]^\intercal \in \R^q.
	\end{gather*}
	Then, $\tilde{h}(W) = h( n^{-\frac{1}{2}} \tilde{J}^{-\frac{1}{2}}_n [S(1, \theta_0)^\intercal \dots, S(K, \theta_0)^\intercal]^\intercal)$. Let
	\begin{align*}
	\Sigma = \Var{W} & = \frac{1}{n}\diag{\sum_{i \in G_1} I_i(\theta_0), \sum_{i \in G_2} I_i(\theta_0), \dots, \sum_{i \in G_K} I_i(\theta_0)} \\
	& = \diag{\bar{I}_n(G_1, \theta_0), \bar{I}_n(G_2, \theta_0), \dots, \bar{I}_n(G_K, \theta_0)}
	\end{align*}
	where we define $\bar{I}_n(G_k, \theta) = \frac{1}{n} \sum_{i \in G_k} I_i(\theta)$. Note a slight abuse of notation since $\bar{I}_n(k, \theta) = \frac{1}{n} \sum_{i=1}^{n_k} I_i(\theta)$. Thus, 
	\begin{equation*}
	\tilde{h}(\Sigma^\frac{1}{2}Z) = h(\tilde{J}^{-\frac{1}{2}}_n A \Sigma^\frac{1}{2}Z).
	\end{equation*}
	
	Now we claim $\tilde{J}^{-\frac{1}{2}}_n A \Sigma^\frac{1}{2} = 1_{q \times q}$. To see this note that, 
	\begin{align*}
	& \quad &A\Sigma A^\intercal &= \tilde{J}_n \\
	& \implies & A\Sigma^\frac{1}{2} &= \tilde{J}^{\frac{1}{2}}_n \\
	& \implies & \tilde{J}^{-\frac{1}{2}}_n A\Sigma^\frac{1}{2} &= 1_{q \times q}.
	\end{align*}
	Where the first equality is easily confirmed by matrix block multiplication. Thus,
	\[\tilde{h}(\Sigma^{\frac{1}{2}}Z) = h(Z).\]
	This implies that \eqref{main triangle inequality 1st term} is equal to
	\begin{equation*}
	\abs{\mathbb{E} [\tilde{h}(W)] - \mathbb{E} [\tilde{h}(\Sigma^{\frac{1}{2}}Z)]}.
	\end{equation*}
	By a similar argument as above we see that
	\begin{equation}\label{eq:Sigma_to_the_neg_1/2}
	\tilde{J}_n^{-1/2}A=\Sigma^{-1/2}= \diag{\bar{I}_n^{-1/2}(G_1;\theta_0), \dots, \bar{I}_n^{-1/2}(G_K;\theta_0)}
	\end{equation}
	
	Now we wish to apply the Stein's method bound found in Theorem \ref{RR_exchangeable_pair_bound} to the above equation. To do so we need $\mathbb{E}[W]$ and $\Var{W}$. From above we have that $\Var{W} = \Sigma$. To find $\mathbb{E}[W]$ note that,
	\begin{equation*}
	\mathbb{E} [W_{[i]}] = n^{-\frac{1}{2}} \mathbb{E} [S(G_i, \theta_0)] = n^{-\frac{1}{2}} \sum_{j \in G_i} \mathbb{E} [S_j (Y_j, \theta_0)] = 0_d.
	\end{equation*}
	Thus, $\mathbb{E} [W] = 0_q$.
	
	The next step in applying Theorem \ref{RR_exchangeable_pair_bound} is to find an appropriate exchangeable pair for $W$. Let $\{Y_i' : i = 1, \dots, n\}$ be an independent copy of $\{Y_i : i = 1, \dots, n\}$. Let $I \in \{1, \dots, n\}$ be a uniform r.v. independent of $\{Y_i'\}_{i = 1, \dots, n}$ and $\{Y_i\}_{i = 1, \dots, n}$. Then we replace $\frac{1}{\sqrt{n}}S_I(Y_I, \theta_0)$ by $\frac{1}{\sqrt{n}}S_I(Y_I', \theta_0)$ so
	\begin{equation*}
	W_{[\hat{k}]}' = W_{[\hat{k}]} - \frac{1}{\sqrt{n}}S_I(Y_I, \theta_0) + \frac{1}{\sqrt{n}}S_I(Y_I', \theta_0)
	\end{equation*}
	where $\hat{k}$ is the group containing observation $Y_I$ and $W_k' = W_k$ for $k \neq \hat{k}$. Then,
	\begin{align*}
	\mathbb{E} [W_{[k]}' - W_{[k]} | W] &= \sum_{j = 1}^{n} \mathbb{E} [W_{[k]}' - W_{[k]} | W, I = j] \mathbb{P}(I = j) \\
	&= \frac{1}{n} \sum_{j \in G_k} \mathbb{E} [W_{[k]}' - W_{[k]} | W, I = j] \\
	&= \frac{1}{n} \sum_{j \in G_k} \mathbb{E} [\frac{1}{\sqrt{n}} S_I(Y_I', \theta_0) - \frac{1}{\sqrt{n}} S_I(Y_I, \theta_0) | W, I = j] \\
	&= \frac{1}{n} \sum_{j \in G_k} \mathbb{E} [\frac{1}{\sqrt{n}} S_j(Y_j', \theta_0) - \frac{1}{\sqrt{n}} S_j(Y_j, \theta_0) | W] \\
	&= -\frac{1}{n} \sum_{j \in G_k} \mathbb{E} [\frac{1}{\sqrt{n}} S_j(Y_j, \theta_0) | W_{[k]}] \\
	&= -\frac{1}{n} \mathbb{E} [\sum_{j \in G_k} \frac{1}{\sqrt{n}} S_j(Y_j, \theta_0) | W_{[k]}] \\
	&= -\frac{1}{n} \mathbb{E} [W_{[k]} | W_{[k]}] \\
	&= -\frac{W_{[k]}}{n}
	\end{align*}
	Thus as in Theorem \ref{RR_exchangeable_pair_bound}, $\Lambda = \frac{1}{n}1_{q \times q}$, $R = 0_{q \times q}$, and it follows that
	\begin{align}
	&\abs{\mathbb{E} [\tilde{h}(W)] - \mathbb{E} [\tilde{h}(\Sigma^{\frac{1}{2}}Z)]}  \leq \nonumber\\
	& \frac{n \norm{\tilde{h}}_2}{4} \sum_{k_1, k_2 = 1}^{K}\sum_{i,j = 1}^{d} \sqrt{\Var{\mathbb{E} [([W_{[k_1]}']_i - [W_{[k_1]}]_i)([W_{[k_2]}']_j - [W_{[k_2]}]_j)| W]}} \label{RR1}\\
	&+\frac{n \norm{\tilde{h}}_3}{12} \sum_{k_1, k_2, k_3 = 1}^{K}\sum_{i,j,u = 1}^{d} \mathbb{E} \left|([W_{[k_1]}']_i - [W_{[k_1]}]_i)\right.\cdot \nonumber\\
		&\left.([W_{[k_2]}']_j - [W_{[k_2]}]_j)([W_{[k_3]}']_u - [W_{[k_3]}]_u)\right| \label{RR2}.
	\end{align}
	We now wish to bound and simplify both \eqref{RR1} and \eqref{RR2}. Notice that by construction for $k_1 \neq k_2$,
	\begin{equation*}
	([W_{[k_1]}']_i - [W_{[k_1]}]_i)([W_{[k_2]}']_j - [W_{[k_2]}]_j) = 0_d.
	\end{equation*}
	Similarly,
	\begin{equation*}
	([W_{[k_1]}']_i - [W_{[k_1]}]_i)([W_{[k_2]}']_j - [W_{[k_2]}]_j)([W_{[k_3]}']_u - [W_{[k_3]}]_u) = 0_d
	\end{equation*}
	unless $k_1 = k_2 = k_3$. Thus \eqref{RR1} is equal to
	\begin{equation}
	\frac{n \norm{\tilde{h}}_2}{4} \sum_{k = 1}^{K}\sum_{i = 1}^{d}\sum_{j = 1}^d \sqrt{\Var
		{\mathbb{E} [([W_{[k]}']_i - [W_{[k]}]_i)([W_{[k]}']_j - [W_{[k]}]_j)| W]}} \label{RR1.0.1}
	\end{equation}
	and \eqref{RR2} is equal to
	\begin{equation}
	\frac{n \norm{\tilde{h}}_3}{12} \sum_{k = 1}^{K}\sum_{i = 1}^{d}\sum_{j = 1}^d\sum_{u = 1}^{d} \mathbb{E} \abs{([W_{[k]}']_i - [W_{[k]}]_i)([W_{[k]}']_j - [W_{[k]}]_j)([W_{[k]}']_u - [W_{[k]}]_u)}. \label{RR2.0.1}
	\end{equation}
	
	In order to write $\norm{\tilde{h}}_2$ in terms of $\norm{h}_2$ and $\norm{\tilde{h}}_3$ in terms of $\norm{h}_3$ we use the chain rule. Let $x = \tilde{J}^{-\frac{1}{2}}_n Aw$ and define  $\abs{X} := \sup \limits_{i, j} \abs{X_{ij}}$ for any matrix $X$. Then,
	\begin{equation*}
	\frac{\partial \tilde{h}}{\partial w_j} = \sum_{i = 1}^{q} \frac{\partial h}{\partial x_i} \frac{\partial x_i}{\partial w_j} = \sum_{i = 1}^{q} \frac{\partial h}{\partial x_i} [\tilde{J}^{-\frac{1}{2}}_n A]_{ij}.
	\end{equation*}
	So,
	\begin{align*}
	\frac{\partial \tilde{h}^2}{\partial w_s \partial w_j} &= \frac{\partial}{\partial w_s } \sum_{i_1 = 1}^{q} \frac{\partial h}{\partial x_{i_1}} [\tilde{J}^{-\frac{1}{2}}_n A]_{i_1j} = \sum_{i_1 = 1}^{q} [\tilde{J}^{-\frac{1}{2}}_n A]_{i_1j} \sum_{i_2 = 1}^{q} \frac{\partial h^2}{\partial x_{i_2} \partial x_{i_1}} \frac{\partial x_{i_2}}{\partial w_s}\\
	&= \sum_{i_1 = 1}^{q} [\tilde{J}^{-\frac{1}{2}}_n A]_{i_1j} \sum_{i_2 = 1}^{q} \frac{\partial h^2}{\partial x_{i_2} \partial x_{i_1}} [\tilde{J}^{-\frac{1}{2}}_n A]_{i_2s} \\
	&\leq \sum_{i_1 = 1}^{q} \sum_{i_2 = 1}^{q} [\tilde{J}^{-\frac{1}{2}}_n A]_{i_1j}  [\tilde{J}^{-\frac{1}{2}}_n A]_{i_2s} \left | \frac{\partial h^2}{\partial x_{i_2} \partial x_{i_1}} \right | \\
	&\leq q^2 \abs{\tilde{J}^{-\frac{1}{2}}_n A}^2 \norm{h}_2 \\
	&= q^2 c^2 \norm{h}_2.
	\end{align*}
	Where the last equality holds by \eqref{eq:Sigma_to_the_neg_1/2} and we remind the reader that
	\begin{equation*}
	c = \max_{k \in \{1, \dots, K\}} \abs{\bar{I}^{-1/2}_n(G_k;\theta_0)}.
	\end{equation*}
	Similarly, 
	\begin{equation*}
	\frac{\partial \tilde{h}^3}{\partial w_u \partial w_s \partial w_j} \leq q^3 c^3 \norm{h}_3.
	\end{equation*}
	By taking the supremum of the left hand side of each of the above equations we get,
	\begin{gather*}
	\norm{\tilde{h}}_2 \leq q^2 c^2 \norm{h}_2, \\
	\norm{\tilde{h}}_3 \leq q^3 c^3 \norm{h}_3.
	\end{gather*}
	Thus combining the above with \eqref{RR1.0.1} and \eqref{RR2.0.1} we see that \eqref{RR1} is less than or equal to
	\begin{equation}
	\frac{n q^2 c^2 \norm{h}_2}{4} \sum_{k = 1}^{K} \sum_{i = 1}^{d}\sum_{j = 1}^d \bigg \{ \\ \Var{\mathbb{E} [([W_{[k]}']_i - [W_{[k]}]_i)([W_{[k]}']_j - [W_{[k]}]_j)| W]} \bigg \}^\frac{1}{2} \label{RR1.1}
	\end{equation}
	and \eqref{RR2} is less than or equal to
	\begin{equation}
	\frac{n q^3 c^3 \norm{h}_3}{12} \sum_{k = 1}^{K} \sum_{i = 1}^{d}\sum_{j = 1}^d\sum_{u = 1}^{d} \mathbb{E} \abs{([W_{[k]}']_i - [W_{[k]}]_i) \cdot \\ ([W_{[k]}']_j - [W_{[k]}]_j)([W_{[k]}']_u - [W_{[k]}]_u)}. \label{RR2.1}
	\end{equation}
	
	To bound the variance of the conditional expectations in \eqref{RR1.1}, we remind the reader of the notation
	\begin{gather*}
	\xi_{ij} = [n^{-\frac{1}{2}} S_i(Y_i, \theta_0)]_j \\	
	\xi_{ij}' = [n^{-\frac{1}{2}} S_i(Y_i', \theta_0)]_j.
	\end{gather*}
	We also denote
	\begin{gather*}
	C_2 = \frac{q^2 c^2 \norm{h}_2}{4} \\
	C_3 = \frac{q^3 c^3 \norm{h}_3}{12}.
	\end{gather*}
	
	Now let $\mathcal{A} = \sigma(Y_1, Y_2, \dots, Y_n)$. Then since $\sigma(W) \subset \mathcal{A}$, for any r.v. $X$, $\Var{ \mathbb{E} [X | W]} \leq \Var{ \mathbb{E} [X | \mathcal{A}]}$. Thus \eqref{RR1.1} is less than or equal to
	\begin{multline}
n C_2 \sum_{k = 1}^{K} \Big\{ \sum_{j = 1}^d \sqrt{\Var{\mathbb{E} [(\xi_{Ij}' - \xi_{Ij})^2 \mathbbm{1}\{I \in G_k \}| \mathcal{A}]}}\\
	+ 2 \sum_{i < j} \sqrt{\Var{\mathbb{E} [(\xi_{Ii}' - \xi_{Ii})(\xi_{Ij}' - \xi_{Ij}) \mathbbm{1}\{I \in G_k \}| \mathcal{A}]}}\Big\}. \label{square terms and cross terms}
\end{multline}
	
	Since $\{Y_i'\}_{i = 1, \dots, n}$ is an independent copy of $\{Y_i\}_{i = 1, \dots, n}$, $\xi_{ik}'$ is independent of $\mathcal{A}$, and $I$ is independent of $\{\xi_{Ik}', \xi_{Ik}\}$, \eqref{square terms and cross terms} becomes
	\begin{multline} \label{RR1.2}
n C_2 \sum_{k = 1}^{K} \sum_{j = 1}^d \left\{ \mathrm{Var} \left[\mathbb{E} [(\xi_{Ij}')^2 \mathbbm{1} \{I \in G_k \}] -2 n^{-1} \sum_{i \in G_k} \mathbb{E} [\xi_{ij}'] \mathbb{E} [\xi_{ij}| \mathcal{A}] \right.\right.\\
\left.\left.+ n^{-1} \sum_{i \in G_k} \mathbb{E} [\xi_{ij}^2| \mathcal{A}] \right] \right\}^{\frac{1}{2}}\\
	+ 2 \sum_{i < j} \left[\mathrm{Var} \left[\mathbb{E} [\xi_{Ii}'\xi_{Ij}' \mathbbm{1} \{I \in G_k \}]  - n^{-1} \sum_{v \in G_k} \mathbb{E} [\xi_{vi}'] \mathbb{E} [\xi_{vj}| \mathcal{A}] \right.\right.\\
	\left.\left. - n^{-1} \sum_{v \in G_k} \mathbb{E} [\xi_{vj}'] \mathbb{E} [\xi_{vi}| \mathcal{A}] + n^{-1} \sum_{v \in G_k} \mathbb{E} [\xi_{vi} \xi_{vj}| \mathcal{A}] \right] \right]^{\frac{1}{2}}.
\end{multline}

	Using that $\mathbb{E} [\xi_{ij}'] = 0$ and independence we get \eqref{RR1.2} is equal to,
	\begin{multline}
	n C_2 \sum_{k = 1}^{K} \Big\{ \sum_{j = 1}^d \Big [ \frac{1}{n^2} \Var{\sum_{i \in G_k} \mathbb{E} [\xi_{ij}^2 | \mathcal{A}]} \Big ]^{\frac{1}{2}} \\
	+ 2 \sum_{i < j} \Big [ \frac{1}{n^2} \mathrm{Var} [ \sum_{v \in G_k} \mathbb{E} [\xi_{vi} \xi_{vj}| \mathcal{A}] ] \Big ]^ {\frac{1}{2}} \Big \} \\
	= C_2 \sum_{k = 1}^{K} \Big\{ \sum_{j = 1}^d \Big [ \Var{\sum_{i \in G_k} \xi_{ij}^2} \Big ]^{\frac{1}{2}} + 2 \sum_{i < j} \Big [ \mathrm{Var} [ \sum_{v \in G_k} \xi_{vi} \xi_{vj}] \Big ]^ {\frac{1}{2}} \Big \} \\
	= C_2 \sum_{k = 1}^{K} \Big\{ \sum_{j = 1}^d \Big [ \sum_{i \in G_k} \Var{\xi_{ij}^2} \Big ]^{\frac{1}{2}} + 2 \sum_{i < j} \Big [ \sum_{v \in G_k} \mathrm{Var} [ \xi_{vi} \xi_{vj}] \Big ]^ {\frac{1}{2}} \Big \} \\
	= \frac{q^2 c^2 \norm{h}_2}{4} K_2(\theta_0)
	\end{multline}
	where $K_2(\theta_0)$ is defined in the theorem statement. Next we simplify \eqref{RR2.1} by noticing that it is equal to
	\begin{align*}
	&n C_3 \sum_{k = 1}^{K}\sum_{i = 1}^{d}\sum_{j = 1}^d\sum_{u = 1}^{d} \mathbb{E} \abs{(\xi'_{Ii} - \xi_{Ii})(\xi'_{Ij} - \xi_{Ij})(\xi'_{Iu} - \xi_{Iu})} \nonumber \\
	&=n C_3 \sum_{k = 1}^{K}\sum_{i = 1}^{d}\sum_{j = 1}^d\sum_{u = 1}^{d} \frac{1}{n} \sum_{v \in G_k} \mathbb{E} \abs{(\xi'_{vi} - \xi_{vi})(\xi'_{vj} - \xi_{vj})(\xi'_{vu} - \xi_{vu})} \nonumber \\
	&= C_3 \sum_{k = 1}^{K} \sum_{v \in G_k} \mathbb{E} \Big [ \sum_{i = 1}^{d}\sum_{j = 1}^d\sum_{u = 1}^{d} \abs{(\xi'_{vi} - \xi_{vi})} \abs{(\xi'_{vj} - \xi_{vj})} \abs{(\xi'_{vu} - \xi_{vu})} \Big ] \nonumber \\
	&= C_3 \sum_{k = 1}^{K} \sum_{v \in G_k} \mathbb{E} \Big [ \sum_{j = 1}^d \abs{(\xi'_{vj} - \xi_{vj})}  \Big ]^3 \nonumber \\
	&= C_3 \sum_{i = 1}^{n} \mathbb{E} \Big [ \sum_{j = 1}^d \abs{(\xi'_{ij} - \xi_{ij})}  \Big ]^3 \nonumber \\
	&= \frac{ q^3 c^3 \norm{h}_3}{12} K_3(\theta_0)
	\end{align*}
	where again the definition of $K_3(\theta_0)$ can be found above in the statement of the theorem. Therefore we have upper bounded the first term in our main triangle inequality~\eqref{main triangle inequality 1st term} by
	\begin{equation}\label{main triangle inequality 1st term bound}
	\frac{q^2 c^2 \norm{h}_2}{4} K_2(\theta_0) + \frac{ q^3 c^3 \norm{h}_3}{12} K_3(\theta_0).
	\end{equation}
	Next we will upper bound the remaining term in our main triangle inequality~\eqref{main triangle inequality 2nd term}.
	
	\paragraph{Upper bound for \eqref{main triangle inequality 2nd term}.} Define
	\begin{equation*}
	S := [S(1, \theta_0)^\intercal, S(2, \theta_0)^\intercal, \dots, S(K, \theta_0)^\intercal]^\intercal.
	\end{equation*} 
	A second-order Taylor expansion of \\ $[S(1, \theta)^\intercal, S(2, \theta)^\intercal, \dots, S(K, \theta)^\intercal]_j^\intercal$ about $\theta_0^K$ evaluated at $\hat{\theta}^K$ yields,
	\begin{multline}
	[S(1, \hat{\theta}_1), S(2, \hat{\theta}_2), \dots, S(K, \hat{\theta}_K)]_j^\intercal = S_j + \sum_{i = 1}^{q} [\hat{\theta}^K - \theta_0^K]_i \Big (\frac{\partial}{\partial \theta_i} S_j\Big) + \\
	\frac{1}{2} \sum_{i = 1}^{q}\sum_{u = 1}^{q} [\hat{\theta}^K - \theta_0^K]_i [\hat{\theta}^K - \theta_0^K]_u \Big (\frac{\partial^2}{\partial \theta_i\partial \theta_u} [S(1, \theta), S(2, \theta), \dots, S(K, \theta)]^\intercal_j \Big \rvert_{\theta = \theta_0^*}\Big).
	\end{multline}
	We clarify a slight abuse of notation for the reader that in order to simplify the expressions in this section we use $\frac{\partial}{\partial \theta_i}$ to signify $\frac{\partial}{\partial (\theta^K)_i}$ where $\theta^K = [\theta_1, \dots, \theta_q]^\intercal$ and for fixed $k$, $S(k, \theta)$ is a function of $\theta_{d(k-1) + 1}, \dots, \theta_{dk}$. So for example $\frac{\partial}{\partial \theta_i} S_j = 0$ if  $j \in \{d(k_1-1) +1, \dots, dk_1\}$ and $i \in \{d(k_2-1) +1, \dots, dk_2\}$ for $k_1 \neq k_2$. 
	
	Define $J_n^* = J_n^*(\theta) = \mathrm{diag}(\bar{I}_n(1, \theta), \bar{I}_n(2, \theta), \dots, \bar{I}_n(K, \theta)) \in \R^{q \times q}$. Noticing that \\ $[S(1, \hat{\theta}_1), S(2, \hat{\theta}_2), \dots, S(K, \hat{\theta}_K)]_j^\intercal = 0$ and adding $\sum_{i = 1}^{q} n[J_n^*]_{ji} [\hat{\theta}^K - \theta_0^K]_i$ to both sides yields, 
	\begin{multline*}
	\sum_{i = 1}^{q} n[J_n^*]_{ji} [\hat{\theta}^K - \theta_0^K]_i = S_j + \sum_{i = 1}^{q} [\hat{\theta}^K - \theta_0^K]_i \Big (\frac{\partial}{\partial \theta_i} S_j + n[J_n^*]_{ji}\Big) + \\
	\frac{1}{2} \sum_{i = 1}^{q}\sum_{u = 1}^{q} [\hat{\theta}^K - \theta_0^K]_i [\hat{\theta}^K - \theta_0^K]_u \Big (\frac{\partial^2}{\partial \theta_i\partial \theta_u} [S(1, \theta), S(2, \theta), \dots, S(K, \theta)]^\intercal_j \Big \rvert_{\theta = \theta_0^*}\Big).
	\end{multline*}
	Using the above which holds $\forall j \in \{1, 2, \dots, q\}$ gives,
	\begin{multline}\label{n_J_star}
	n J_n^* (\hat{\theta}^K - \theta_0^K) = S + \sum_{i = 1}^{q} [\hat{\theta}^K - \theta_0^K]_i \Big (\frac{\partial}{\partial \theta_i} S + n[J_n^*]_{*i}\Big) + \\
	\frac{1}{2} \sum_{i = 1}^{q}\sum_{u = 1}^{q} [\hat{\theta}^K - \theta_0^K]_i [\hat{\theta}^K - \theta_0^K]_u \Big (\frac{\partial^2}{\partial \theta_i\partial \theta_u} [S(1, \theta), S(2, \theta), \dots, S(K, \theta)]^\intercal \Big \rvert_{\theta = \theta_0^*}\Big).
	\end{multline}
	
	Recalling the definitions of $\tilde{J}_n$ and $J_n$ we will show that  $\tilde{J}_n^{-\frac{1}{2}} = J_n^{-\frac{1}{2}} {J_n^*}^{-1}$. To demonstrate this equality we first observe that $\tilde{J}_n = J_n^* J_n J_n^*$ follows from simple block matrix multiplication. Then,
	\begin{align*}
	\tilde{J}_n &= J_n^* J_n J_n^* \\
	\implies \tilde{J}_n^{-1} &= {J_n^*}^{-1} J_n^{-1} {J_n^*}^{-1} \\
	\implies J_n^{-1} &= {J_n^*} \tilde{J}_n^{-1} {J_n^*} \\
	\implies J_n^{-\frac{1}{2}} &= \tilde{J}_n^{-\frac{1}{2}} {J_n^*} \\
	\implies \tilde{J}_n^{-\frac{1}{2}} &= J_n^{-\frac{1}{2}} {J_n^*}^{-1}.
	\end{align*}
	Thus by multiplying both sides of \eqref{n_J_star} by $\tilde{J}_n^{-\frac{1}{2}}/\sqrt{n}$ we get
	\begin{multline} \label{taylor form of (hat theta^K - theta^K)}
	\sqrt{n} J_n^{-\frac{1}{2}} (\hat{\theta}^K - \theta_0^K) = \frac{\tilde{J}_n^{-\frac{1}{2}}}{\sqrt{n}} \Big \{ S + \sum_{i = 1}^{q} [\hat{\theta}^K - \theta_0^K]_i \Big (\frac{\partial}{\partial \theta_i} S + n[J_n^*]_{*i}\Big) + \\
	\frac{1}{2} \sum_{i = 1}^{q}\sum_{u = 1}^{q} [\hat{\theta}^K - \theta_0^K]_i [\hat{\theta}^K - \theta_0^K]_u \Big (\frac{\partial^2}{\partial \theta_i\partial \theta_u} [S(1, \theta), S(2, \theta), \dots, S(K, \theta)]^\intercal \Big \rvert_{\theta = \theta_0^*}\Big) \Big \}.
	\end{multline}
	
	Recall that $Q_i := [\hat{\theta}^K - \theta_0^K]_i$ and define
	\begin{equation*}
	R := \frac{\tilde{J}_n^{-\frac{1}{2}}}{2 \sqrt{n}} \sum_{i = 1}^{q}\sum_{u = 1}^{q} Q_i Q_u \Big (\frac{\partial^2}{\partial \theta_i\partial \theta_u} [S(1, \theta), S(2, \theta), \dots, S(K, \theta)]^\intercal \Big \rvert_{\theta = \theta_0^*}\Big).
	\end{equation*}
	Notice that $R$ is the last term of the right hand side of \eqref{taylor form of (hat theta^K - theta^K)}. Furthermore $R$ has no relation to the remainder matrix that appears in Theorem \ref{RR_exchangeable_pair_bound} which will not be used within the current proof. Let
	\begin{align*}
	T_1 &:= h \big (\sqrt{n} J_n^{- \frac{1}{2}} (\hat{\theta}^K - \theta_0^K) \big ) - h(\frac{1}{\sqrt{n}} \tilde{J}_n^{- \frac{1}{2}} S + R) \\
	T_2 &:= h(\frac{1}{\sqrt{n}} \tilde{J}_n^{- \frac{1}{2}} S + R) - h(\frac{1}{\sqrt{n}} \tilde{J}_n^{- \frac{1}{2}} S).
	\end{align*}
	Then \eqref{main triangle inequality 2nd term} is equal to
	\[\abs{\mathbb{E} T_1 + \mathbb{E} T_2} \leq \mathbb{E} \abs{T_1} + \mathbb{E} \abs{T_2}.\]
	
	We first address an upper bound for $T_1$. Using a first degree multivariate Taylor series approximation of $h$ evaluated at $\sqrt{n} J_n^{- \frac{1}{2}} (\hat{\theta}^K - \theta_0^K)$ and centered at $n^{-\frac{1}{2}} \tilde{J}_n^{- \frac{1}{2}} S + R$ we get,
	\begin{multline*}
	h \big (\sqrt{n} J_n^{- \frac{1}{2}} (\hat{\theta}^K - \theta_0^K) \big ) = h(n^{-\frac{1}{2}} \tilde{J}_n^{- \frac{1}{2}} S + R) + \\
	\sum_{j = 1}^{q} \frac{\partial}{\partial \theta_j}h(x^*) \Big ( \sqrt{n} J_n^{- \frac{1}{2}} (\hat{\theta}^K - \theta_0^K) - n^{-\frac{1}{2}} \tilde{J}_n^{- \frac{1}{2}} S - R \Big )_{j*},
	\end{multline*}
	where in this case $j*$ indicates the $j^{\text{th}}$ row. Simplifying yields
	\begin{equation*}
	\abs{T_1} \leq \norm{h}_1 \sum_{j = 1}^{q} \abs{ \sqrt{n} [J_n^{- \frac{1}{2}}]_{j*} (\hat{\theta}^K - \theta_0^K) - n^{-\frac{1}{2}} [\tilde{J}_n^{- \frac{1}{2}}]_{j*} S - R_{j*} }.
	\end{equation*}
	
	Next we will write $\sqrt{n} [J_n^{- \frac{1}{2}}]_{j*} (\hat{\theta}^K - \theta_0^K)$ in terms of $n^{-\frac{1}{2}} [\tilde{J}_n^{- \frac{1}{2}}]_{j*} S$ and two remainder terms. Using $\eqref{taylor form of (hat theta^K - theta^K)}$ component wise and substituting into the above for $\sqrt{n} [J_n^{- \frac{1}{2}}]_{j*} (\hat{\theta}^K - \theta_0^K)$ gives us
	\begin{align*}
	\abs{T_1} &\leq \norm{h}_1 \sum_{j = 1}^{q} \Big \lvert n^{-\frac{1}{2}} [\tilde{J}_n^{- \frac{1}{2}}]_{j*} \sum_{i = 1}^{q} [\hat{\theta}^K - \theta_0^K]_i \Big (\frac{\partial}{\partial \theta_i} S + n[J_n^*]_{*i}\Big) \Big \rvert \\
	&\leq \frac{\norm{h}_1}{\sqrt{n}} \sum_{j = 1}^{q} \sum_{l = 1}^{q} \abs{[\tilde{J}_n^{- \frac{1}{2}}]_{jl}} \sum_{i = 1}^{q} \Big \lvert [\hat{\theta}^K - \theta_0^K]_i \Big ([\frac{\partial}{\partial \theta_i} S]_l + n[J_n^*]_{li}\Big) \Big \rvert.
	\end{align*}
	Taking the expected value and applying the Cauchy-Schwartz inequality we get
	\begin{equation*}
	\mathbb{E} \abs{T_1} \leq \frac{\norm{h}_1}{\sqrt{n}} \sum_{j = 1}^{q} \sum_{l = 1}^{q} \abs{[\tilde{J}_n^{- \frac{1}{2}}]_{jl}} \sum_{i = 1}^{q} \sqrt{ \mathbb{E} [[\hat{\theta}^K - \theta_0^K]_i]^2 \mathbb{E} \Big ([\frac{\partial}{\partial \theta_i} S]_l + n[J_n^*]_{li}\Big)^2 }.
	\end{equation*} 
	By writing the matrices in block form and removing the zero summands, the above becomes
	\begin{multline} \label{T1 Bound}
	\mathbb{E} \abs{T_1} \leq \frac{\norm{h}_1}{\sqrt{n}} \sum_{k_1 = 1}^{K} \sum_{k_2 = 1}^{K} \sum_{j = 1}^{d} \sum_{l = 1}^{d} \abs{[[\tilde{J}_n^{- \frac{1}{2}}]_{[k_2][k_1]}]_{jl}} \Big \{ \\
	\sum_{i = 1}^{d} \sqrt{ \mathbb{E} [[\hat{\theta}_{k_1} - \theta_0]_i]^2 \mathbb{E} \Big (\frac{\partial^2}{\partial \theta_i \partial \theta_l} \ell_{k_1}(\theta_0, Y) + n\bar{I}_n(k_1, \theta_0)_{li}\Big)^2 }   \Big \}.
	\end{multline} 
	
	We now address an upper bound for $T_2$. By the law of total probability and using $Q_{(m)} =\max_{i \in \{1, \dots, q\}} Q_i$ we see that
	\begin{align} \label{E T2}
	\mathbb{E} \abs{T_2} &= \mathbb{E} [\abs{T_2} \mid \abs{Q_{(m)}} \geq \epsilon] \mathbb{P} (\abs{Q_{(m)}} \geq \epsilon) + \mathbb{E} [\abs{T_2} \mid \abs{Q_{(m)}} < \epsilon] \mathbb{P} (\abs{Q_{(m)}} < \epsilon) \nonumber\\
	&\leq 2 \norm{h} \mathbb{P} (\abs{Q_{(m)}} \geq \epsilon) + \mathbb{E} [\abs{T_2} \mid \abs{Q_{(m)}} < \epsilon] \nonumber \\
	&\leq \frac{2\norm{h}}{\epsilon^2} \mathbb{E} Q_{(m)}^2 + \mathbb{E} [\abs{T_2} \mid \abs{Q_{(m)}} < \epsilon] \nonumber \\
	&\leq \frac{2\norm{h}}{\epsilon^2} \mathbb{E} \big [\sum_{j = 1}^{q}Q_{j}^2 \big] + \mathbb{E} [\abs{T_2} \mid \abs{Q_{(m)}} < \epsilon],
	\end{align}
	where the third line follows from Markov's inequality. To bound $\mathbb{E} [\abs{T_2} \mid \abs{Q_{(m)}} < \epsilon]$ we again use a first order Taylor expansion of $h$ evaluated at $(n \tilde{J}_n)^{- \frac{1}{2}} S + R$ and centered at $(n\tilde{J}_n)^{- \frac{1}{2}} S$ which gives,
	\begin{multline} \label{T2}
	\abs{T_2} \leq \norm{h}_1 \sum_{j = 1}^{q} \abs{R_{j*}} \\
	= \frac{\norm{h}_1}{2\sqrt{n}} \sum_{l = 1}^{q} \sum_{j = 1}^{q} \abs{[\tilde{J}_n^{-\frac{1}{2}}]_{lj}}\sum_{i = 1}^{q}\sum_{u = 1}^{q} Q_i Q_u \Big (\frac{\partial^2}{\partial \theta_i\partial \theta_u} [S(1, \theta), \dots, S(K, \theta)]^\intercal_j \Big \rvert_{\theta = \theta_0^*}\Big).
	\end{multline}
	Combining \eqref{E T2} and \eqref{T2} we get
	\begin{multline*}
	\mathbb{E} \abs{T_2} = \frac{2\norm{h}}{\epsilon^2} \mathbb{E} \big [\sum_{j = 1}^{q}Q_{j}^2 \big] + 
	\frac{\norm{h}_1}{2\sqrt{n}} \sum_{l = 1}^{q} \sum_{j = 1}^{q} \abs{[\tilde{J}_n^{-\frac{1}{2}}]_{lj}}\sum_{i = 1}^{q}\sum_{u = 1}^{q} \Big \{ \\
	\mathbb{E} \Big [ Q_i Q_u \Big (\frac{\partial^2}{\partial \theta_i\partial \theta_u} [S(1, \theta), \dots, S(K, \theta)]^\intercal_j \Big \rvert_{\theta = \theta_0^*}\Big) \mid \abs{Q_{(m)}} < \epsilon \Big ] \Big\}.
	\end{multline*}
	Using the Cauchy-Schwartz inequality and \citet[][Lemma~4.1]{mvn}, 
	\begin{multline*}
	\mathbb{E} \abs{T_2} \leq \frac{2\norm{h}}{\epsilon^2} \mathbb{E} \big [\sum_{j = 1}^{q}Q_{j}^2 \big] + 
	\frac{\norm{h}_1}{2\sqrt{n}} \sum_{l = 1}^{q} \sum_{j = 1}^{q} \abs{[\tilde{J}_n^{-\frac{1}{2}}]_{lj}}\sum_{i = 1}^{q}\sum_{u = 1}^{q} \Big \{ \\
	(\mathbb{E} [Q_i^2 Q_u^2])^{\frac{1}{2}} (\mathbb{E} [\big (\frac{\partial^2}{\partial \theta_i\partial \theta_u} [S(1, \theta), \dots, S(K, \theta)]^\intercal_j \Big \rvert_{\theta = \theta^{\dagger}} \big )^2 \mid \abs{Q_{(m)}} < \epsilon ] )^\frac{1}{2}\Big\}.
	\end{multline*}
	Now by writing the matrices in block form, the above becomes
	\begin{multline*}
	\mathbb{E} \abs{T_2} \leq \frac{2\norm{h}}{\epsilon^2} \mathbb{E} \big [\sum_{j = 1}^{q}Q_{j}^2 \big] + 
	\frac{\norm{h}_1}{2\sqrt{n}} \sum_{k_1 = 1}^{K} \sum_{k_2 = 1}^{K} \sum_{l = 1}^{d} \sum_{j = 1}^{d} \abs{[[\tilde{J}_n^{-\frac{1}{2}}]_{[k_2][k_1]}]_{lj}} \Big \{ \\ 
	\sum_{i = 1}^{q}\sum_{u = 1}^{q} (\mathbb{E} [Q_i^2 Q_u^2])^{\frac{1}{2}} (\mathbb{E} [\big (\frac{\partial^2}{\partial \theta_i\partial \theta_u} S(k_1;\theta)_{j} \Big \rvert_{\theta = \theta^{\dagger}} \big )^2 \mid \abs{Q_{(m)}} < \epsilon ] )^\frac{1}{2}\Big\}.
	\end{multline*}
	We notice that $\frac{\partial^2}{\partial \theta_i\partial \theta_u} S(k_1, \theta)_j \Big \rvert_{\theta = \theta^{\dagger}}$ is equal to $0$ unless $\theta_i$ and $\theta_u$ are both corresponding to $S(k_1, \theta)$. Thus we simplify to get, 
	\begin{multline} \label{T2-bound-J-tilde}
	\mathbb{E} \abs{T_2} \leq \frac{2\norm{h}}{\epsilon^2} \mathbb{E} \big [\sum_{j = 1}^{q}Q_{j}^2 \big] + 
	\frac{\norm{h}_1}{2\sqrt{n}} \sum_{k_1 = 1}^{K} \sum_{k_2 = 1}^{K} \sum_{l = 1}^{d} \sum_{j = 1}^{d} \abs{[[\tilde{J}_n^{-\frac{1}{2}}]_{[k_2][k_1]}]_{lj}} \Big \{ \\ 
	\sum_{i = 1}^{d}\sum_{u = 1}^{d} \big (\mathbb{E} \big \{([\hat{\theta}_{k_1} - \theta_0]_i)^2 ([\hat{\theta}_{k_1} - \theta_0]_u)^2 \big \} \big )^{\frac{1}{2}} \cdot \\ (\mathbb{E} [ \big (\frac{\partial^3}{\partial \theta_i \partial \theta_u \partial \theta_j} \ell_{k_1}(\theta, Y) \Big \rvert_{\theta = \theta_0^*} \big)^2 \mid \abs{Q_{(m)}} < \epsilon ] )^\frac{1}{2}\Big\} \\
	\leq \frac{2\norm{h}}{\epsilon^2} \mathbb{E} \big [\sum_{j = 1}^{q}Q_{j}^2 \big] + 
	\frac{\norm{h}_1}{2\sqrt{n}} \sum_{k_1 = 1}^{K} \sum_{k_2 = 1}^{K} \sum_{l = 1}^{d} \sum_{j = 1}^{d} \abs{[[\tilde{J}_n^{-\frac{1}{2}}]_{[k_2][k_1]}]_{lj}} \Big \{ \\ 
	\sum_{i = 1}^{d}\sum_{u = 1}^{d} \big (\mathbb{E} \big \{([\hat{\theta}_{k_1} - \theta_0]_i)^2 ([\hat{\theta}_{k_1} - \theta_0]_u)^2 \big \} \big )^{\frac{1}{2}} (\mathbb{E} [ \big (M^{k_1}_{iuj}(Y) \big)^2 \mid \abs{Q_{(m)}} < \epsilon ] )^\frac{1}{2}\Big\}.
	\end{multline}
	
	As shown above $\tilde{J}_n^{-\frac{1}{2}}A\Sigma^{\frac{1}{2}} = 1_{q\times q}$ and so $\tilde{J}_n^{-\frac{1}{2}} = \Sigma^{-\frac{1}{2}}A^{-1}$. Inverting $A$ and simple block matrix multiplication yields
	\begin{equation}
		[\tilde{J}_n^{-\frac{1}{2}}]_{[k_2][k_1]} = 
		\begin{cases}
		\bar{I}_n^{-\frac{1}{2}}(G_{k_2}; \theta_0) & k_2 = k_1 \\
		-\bar{I}_n^{-\frac{1}{2}}(G_{k_2}; \theta_0) & k_2 = k_1+1 \\
		0_{d \times d} & \text{ otherwise }.
		\end{cases}
	\end{equation}
	Hence, \eqref{T2-bound-J-tilde} simplifies to
	\begin{multline} \label{T2-bound}
	\mathbb{E} \abs{T_2} \leq \frac{2\norm{h}}{\epsilon^2} \mathbb{E} \big [\sum_{j = 1}^{q}Q_{j}^2 \big] + 
	\frac{\norm{h}_1}{2\sqrt{n}} \sum_{k_1 = 1}^{K} \sum_{k_2 = k_1}^{\min\{k_1 + 1, K\}} \sum_{l = 1}^{d} \sum_{j = 1}^{d} \abs{\bar{I}_n^{-\frac{1}{2}}(G_{k_2}; \theta_0)_{lj}} \Big \{ \\ 
	\sum_{i = 1}^{d}\sum_{u = 1}^{d} \big (\mathbb{E} \big \{([\hat{\theta}_{k_1} - \theta_0]_i)^2 ([\hat{\theta}_{k_1} - \theta_0]_u)^2 \big \} \big )^{\frac{1}{2}} (\mathbb{E} [ \big (M^{k_1}_{iuj}(Y) \big)^2 \mid \abs{Q_{(m)}} < \epsilon ] )^\frac{1}{2}\Big\}.
	\end{multline}
	
	Therefore from an analogous simplification of $\tilde{J}_n^{-\frac{1}{2}}$ made to \eqref{T1 Bound} along with \eqref{T2-bound} we have \eqref{main triangle inequality 2nd term} is bounded above by
	\begin{equation} \label{main triangle inequality 2nd term bound}
	\frac{2\norm{h}}{\epsilon^2} \mathbb{E} \big [\sum_{j = 1}^{q}Q_{j}^2 \big] + \frac{\norm{h}_1}{\sqrt{n}} K_1(\theta_0).
	\end{equation}
	Combining \eqref{main triangle inequality 2nd term bound} and \eqref{main triangle inequality 1st term bound} gives the desired bounds in \eqref{my_result_1_statement}.
\end{proof}

In the following lemma we use multi-index notation. A multi-index in an $n$-tuple of nonnegative integers 

\[\alpha = (\alpha_1, \alpha_2, \dots, \alpha_n) \qquad (\alpha_j \in \{0, 1, 2, \dots\}).\]
If $\alpha$ is a multi-index then,
\begin{gather*}
\abs{\alpha} = \alpha_1 + \alpha_2 + \dots \alpha_n, \qquad \alpha! = \alpha_1!\alpha_2!\dots\alpha_n!\, \\ 
\bm{x}^\alpha = x_1^{\alpha_1}x_2^{\alpha_2} \dots x_n^{\alpha_n} \quad (\bm{x} \in \R^n), \\
\partial^\alpha f = \frac{\partial^{\abs{\alpha}}f}{\partial x_1^{\alpha_1} \partial x_2^{\alpha_2} \dots \partial x_n^{\alpha_n}}.
\end{gather*}

\section{Proof of Corollary~\ref{cor:exp-fam}}
To prove the corollary we solve for $K_1(\eta)$, $K_2(\eta)$, and $K_3(\eta)$ from Theorem~\ref{my_result_1}. Since the observations are i.i.d., the information matrices simplify to
\begin{equation}\label{eq:exponential-fam-I}
	\bar{I}_n(k;\eta) = \frac{n_k}{n}I(\eta), \quad \bar{I}(k;\eta) = I(\eta)\sum_{i=1}^{k}\gamma_k.
\end{equation}
Further, using \eqref{eq:exponential-family-expectation-of-suff-statistic},
\begin{align}\label{eq:exponential-fam-info-matrix}
	I(\eta) = \E{-\nabla S^\intercal(Y;\eta)} = \E{\nabla^2_\eta A(\eta)} = \nabla^2_\eta A(\eta) = \Varsub{\eta}{T(Y)}
\end{align} and plugging these into \eqref{eq:exponential-fam-I} gives
\begin{align}
\bar{I}_n(k;\eta) &= \frac{n_k}{n}\Varsub{\eta}{T(Y)}, \quad \bar{I}(k;\eta) = \Varsub{\eta}{T(Y)}\sum_{i=1}^{k}\gamma_k \label{eq:exponential-fam-I-bar}\\
\bar{I}_n(G_k;\eta) &= \frac{\abs{G_k}}{n}\Varsub{\eta}{T(Y)}, \quad \bar{I}(k;\eta) = \gamma_k \Varsub{\eta}{T(Y)}. \label{eq:exponential-fam-I-bar-groups}
\end{align}

Referring to \eqref{my_result_1_statement}, we start by simplifying 
\begin{align*}
	&K_1(\eta_0) = \sum_{k_1 = 1}^{K} \sum_{k_2 = k_1}^{\min\{k_1 + 1, K\}} \sum_{l = 1}^{d} \sum_{j = 1}^{d} \abs{\bar{I}_n^{-\frac{1}{2}}(G_{k_2}; \eta_0)_{lj}} \Bigg \{ \Bigg ( \nonumber \\
	&\sum_{i = 1}^{d} \sqrt{ \mathbb{E} [[\hat{\eta}_{[k_1]} - \eta_0]_i]^2 \mathbb{E} \Big (\frac{\partial^2}{\partial \eta_i \partial \eta_l} \ell_{k_1}(\eta_0, Y) + n\bar{I}_n(k_1, \eta_0)_{li}\Big)^2 }   \Bigg ) \nonumber \\
	&+ \frac{1}{2} \sum_{i = 1}^{d}\sum_{u = 1}^{d} \big (\mathbb{E} \big \{([\hat{\eta}_{[k_1]} - \eta_0]_i)^2 ([\hat{\eta}_{[k_1]} - \eta_0]_u)^2 \big \} \big )^{\frac{1}{2}} (\mathbb{E} [ \big (M^{k_1}_{iul}(Y) \big)^2 \mid \abs{Q_{(m)}} < \epsilon ] )^\frac{1}{2} \Bigg\}.
\end{align*}
By using \eqref{eq:exponential-fam-I-bar} and differentiating the log-likelihood, 
\begin{equation*}
	\frac{\partial^2}{\partial \eta_i \partial \eta_l} \ell_{k_1}(\eta_0, Y) = -n_k\frac{\partial^2}{\partial \eta_i \partial \eta_l} A(\eta) = -n\bar{I}_n(k_1, \eta_0)_{li}.
\end{equation*}
Thus,
\begin{equation}\label{eq:exponential-fam-K_1-I-bar}
	\mathbb{E} \Big (\frac{\partial^2}{\partial \eta_i \partial \eta_l} \ell_{k_1}(\eta_0, Y) + n\bar{I}_n(k_1, \eta_0)_{li}\Big)^2 = 0
\end{equation}
and so the first sum indexed by $i$ in $K_1$ is equal to $0$. We are left with simplifying $\mathbb{E} \big \{([\hat{\eta}_{[k_1]} - \eta_0]_i)^2 ([\hat{\eta}_{[k_1]} - \eta_0]_u)^2 \big \}$ and $\mathbb{E} [ \big (M^{k_1}_{iul}(Y) \big)^2 \mid \abs{Q_{(m)}} < \epsilon ]$. From the general form of the MLE of exponential families~\eqref{eq:exponential-fam-mle-special-form},
\begin{equation*}
	[\hat{\eta}_{[k_1]} - \eta_0]_i = \tau^{-1}\left (\frac{1}{n_{k_1}}\sum_{s=1}^{n_{k_1}}T(y_s) \right )_i - \eta_{0,i}.
\end{equation*}
Thus,
\begin{equation}\label{eq:exponential-fam-K_1-MSE}
\begin{split}
	\mathbb{E} \big \{([\hat{\eta}_{[k_1]} - \eta_0]_i)^2 ([\hat{\eta}_{[k_1]} - \eta_0]_u)^2 \big \} = \mathbb{E} \Bigg\{\left (\tau^{-1}\left (\frac{1}{n_{k_1}}\sum_{s=1}^{n_{k_1}}T(y_s) \right )_i - \eta_{0,i}\right )^2 \times \\ \left (\tau^{-1}\left (\frac{1}{n_{k_1}}\sum_{s=1}^{n_{k_1}}T(y_s) \right )_u - \eta_{0,u}\right )^2 \Bigg\}
\end{split}
\end{equation}
Without knowing the sufficient statistic $T(y)$ we cannot reduce this term further. Next we move on to the term involving the bounding function $M^{k_1}_{iul}(Y)$.

Taking a third partial derivative of the log-likelihood and recalling \eqref{eq:cumulant-3rd-derivative} we see that,
\begin{align*}
	\abs{\frac{\partial^3}{\partial \eta_i \partial \eta_u \partial \eta_l} \ell(\eta, y^{n_{k_1}})}
	= n_{k_1}\abs{\frac{\partial^3}{\partial \eta_i \partial \eta_u \partial \eta_l} A(\eta)} \\ = n_{k_1}\abs{(\mu_{ilk} - \mu_{il}\mu_{k} - \mu_{ik}\mu_l -\mu_{lk}\mu_i + 2\mu_i\mu_l \mu_{k})}.
\end{align*}
Note that the right hand side of the above equation is a function of $\eta$ only and not the observations $Y_1, \dots, Y_{n_k}$. 
Then,
\begin{equation}\label{eq:exponential-fam-M}
	(\mathbb{E} [ \big (M^{k_1}_{iul}(Y) \big)^2 \mid \abs{Q_{(m)}} < \epsilon ] )^\frac{1}{2} = n_{k_1} \mu^{\epsilon}_{iul},
\end{equation} where the latter is given by \eqref{eq:exp-fam-mu-epsilon}. Combining \eqref{eq:exponential-fam-K_1-I-bar}, \eqref{eq:exponential-fam-K_1-MSE}, \eqref{eq:exponential-fam-M}, and \eqref{eq:exponential-fam-I-bar-groups} we get,
\begin{equation}\label{eq:exponential-fam-K1}
\begin{split}
	K_1(\eta_0) = &\frac{1}{2} \sum_{k_1 = 1}^{K} \sum_{k_2 = k_1}^{\min\{k_1 + 1, K\}} n_{k_1} \left (\frac{\abs{G_{k_2}}}{n} \right)^{-\frac{1}{2}}\sum_{l = 1}^{d} \sum_{j = 1}^{d} \mathrm{Var}^{-\frac{1}{2}}\left[{T(Y)}\right]_{lj} \Bigg \{ \\
	& \sum_{i = 1}^{d}\sum_{u = 1}^{d} \mu^{\epsilon}_{iul} \Bigg (\mathbb{E} \Bigg[\left (\tau^{-1}\left (\frac{1}{n_{k_1}}\sum_{s=1}^{n_{k_1}}T(y_s) \right )_i - \eta_{0,i}\right )^2 \times \\ &\left (\tau^{-1}\left (\frac{1}{n_{k_1}}\sum_{s=1}^{n_{k_1}}T(y_s) \right )_u - \eta_{0,u}\right )^2 \Bigg] \Bigg )^{\frac{1}{2}} \Bigg\}.
\end{split}
\end{equation}

Now we consider $K_2(\eta)$ in \eqref{my_result_1_statement}. Recall that,
\begin{equation*}
	K_2(\eta) = \sum_{k = 1}^{K} \Big\{ \sum_{j = 1}^d \Big [ \sum_{i \in G_k} \Var{\xi_{ij}^2} \Big ]^{\frac{1}{2}} + 2 \sum_{i < j} \Big [ \sum_{v \in G_k} \mathrm{Var} [ \xi_{vi} \xi_{vj}] \Big ]^ {\frac{1}{2}} \Big \}.
\end{equation*}
A familiar calculation yields
\begin{gather*}
	\Var{\xi_{ij}^2} = n^{-2}\Var{(T_j(Y) - \Esub{\eta_0}{T_j(Y)})^2} \\
	\Var{\xi_{vi}\xi_{vj}} = n^{-2}\Var{(T_i(Y) - \Esub{\eta_0}{T_i(Y)})(T_j(Y) - \Esub{\eta_0}{T_j(Y)})}.
\end{gather*}
Therefore plugging into the above form of $K_2(\eta)$ yields 
\begin{equation}\label{eq:exponential-fam-K2}
\begin{split}
K_2(\eta) = \frac{1}{\sqrt{n}}\sum_{k = 1}^{K} \left(\frac{\abs{G_k}}{n}\right)^{\frac{1}{2}}\Big\{ \sum_{j = 1}^d \Big [ \Var{(T_j(Y) - \mu_j(\eta_0))^2} \Big ]^{\frac{1}{2}} + \\
2 \sum_{i < j} \Big [ \Var{(T_i(Y) - \mu_i(\eta_0))(T_j(Y) - \mu_j(\eta_0))} \Big ]^ {\frac{1}{2}} \Big \}.
\end{split}
\end{equation}

Finally we address $K_3(\eta)$,
\begin{equation*}
\sum_{i = 1}^{n} \mathbb{E} \Big [ \sum_{j = 1}^d \abs{(\xi'_{ij} - \xi_{ij})}  \Big ]^3.
\end{equation*}
As before, we see that
\begin{align*}
	\abs{\xi_{ij}' - \xi_{ij}} &= n^{-\frac{1}{2}}\abs{(T_j(Y') - \mu_j(\eta_0)) -(T_j(Y) - \mu_j(\eta_0))} \\
	&= \abs{T_j(Y') -T_j(Y)}.
\end{align*}
Hence $K_3(\eta_0)$ is equal to
\begin{equation}\label{eq:exp-fam-K3}
	\frac{1}{\sqrt{n}} \mathbb{E} \Big [ \sum_{j = 1}^d \abs{T_j(Y') -T_j(Y)}  \Big ]^3.
\end{equation}
Combining \eqref{eq:exponential-fam-K1}, \eqref{eq:exponential-fam-K2}, and \eqref{eq:exp-fam-K3} gives the corllary.
\end{document}